\title{The Chow Ring Classes of $\PGL_3$ Orbit Closures in $\GG(1,5)$}
\author{Gaurav (Dhruv) Goel\footnote{Herchel Smith Harvard Undergraduate Summer Research Program 2022}}
\date{October 2022}
\begin{document}

\maketitle

\abstract{The space of all pencils of conics in the plane $\PP V$ (where $\dim V = 3$) is a projective Grassmannian $\GG(1,\PP\Sym^2 V^*)$  and admits a natural $\PGL(V)$ action. It is a classical theorem that this action has exactly eight orbits, and in fact that the orbit of a pencil $\ell\subset \PP\Sym^2 V^*$ is determined completely by its position with respect to the Veronese surface $X\subset \PP\Sym^2V^*$ of rank 1 conics and its secant variety $S(X)\subset \PP\Sym^2 V^*$, which is the cubic fourfold of rank 2 conics. In this paper, we present some geometric descriptions of these orbits. Then, using a mixture of direct enumerative techniques and some Chern class computations, we present a calculation of the classes of the orbit closures in the Chow ring of this Grassmannian (and consequently also of their degrees under the Plücker embedding $\GG(1,\PP\Sym^2 V^*)\injto \PP\Lambda^2 \Sym^2 V^*$).}

\newpage 
\tableofcontents

\newpage
\section{Introduction}

Let $V$ be an $n+1$ dimensional vector space. The complete linear system \[\PP H^0\OO_{\PP V}(2)\cong \PP\Sym^2 V^*\cong \PP^{n(n+3)/2}\] of quadrics on $\PP V=\PP^n$ admits a natural action by $\PGL(V)=\PGL_{n+1}$. It is a standard result from linear algebra that this $\PGL(V)$ action has exactly $n+1$ orbits, classified by the rank of the corresponding bilinear form (at least when the base field is algebraically closed). As natural generalization of this question, we can ask the same question about \textit{families} of quadrics in $\PP V$. The space of all $d$-dimensional linear systems of quadrics on $\PP V$ is the projective Grassmanian $\GG(d, \PP\Sym^2 V^*)$, which consequently also carries a natural $\PGL(V)$ action (the above being the case $d=0$). The orbits of this $\PGL(V)$ action are irreducible locally closed subsets of $\GG(d,\PP\Sym^2 V^*)$, and we can ask many questions about them, e.g.: What are their dimensions? Are there only finitely many? If so, what is the corresponding orbit class space or its Hasse diagram (see \cite{Hasse})? Where is each orbit closure smooth or singular? What are the classes of the orbit closures in the Chow ring $A\GG(d,\PP\Sym^2 V^*)?$

When $n=1$, we're looking at the $\PP^2$ of quadratic polynomials on $\PP^1$, and we have $d\in\{0,1,2\}$. For $d=0$, we have two orbits: the dense orbit (corresponding to two distinct points) and the discriminant conic (corresponding to double points); the $\PGL(V)=\PGL_2$ in this case is exactly the subgroup of $\Aut \PP^2=\PGL_3$ preserving the discriminant conic. For $d=1,$ the situation is  entirely dual: we have the dense orbit (of lines transverse to the discriminant conic) and the dual to the discriminant conic. Obviously, the case $d=2$ is trivial.

For $n\geq 2$, the simplest nontrivial case is $d=1$, corresponding to pencils of quadrics. This corresponds to a $\PGL_{n+1}$ action on $\GG(1, n(n+3)/2)$. In this case, the classification is nontrivial but well understood, at least in the case of pencils that contain at least one smooth quadric. Here there are two invariants: a discrete invariant called the \textit{Segre symbol} and a continuous invariant corresponding to the multiset of $n+1$ points in $\PP^1$ defined by the vanishing of the discriminant of the pencil. It is a classical theorem of Weierstrass and Segre that these are the \textit{only} invariants (for a proof, see this historical account \cite{FMS} or the classic textbook \cite{HodgePedoe}). The result of Weierstrass-Segre can also be interpreted very geometrically as being a classification obtained by considering the positions of the lines with respect to the various $\PGL_{n+1}$ orbit closures in $\PP^{n(n+3)/2}$ and the types of singularities of the base locus of the pencil (see \cite{Dimca}).

Since multisets of $4$ or more points have continuous invariants\footnote{For instance, for $4$ distinct points on $\PP^1$, we have the cross ratio $\lambda$, or more precisely the corresponding $j$-invariant \[j(\lambda) = 256\frac{(1-\lambda+\lambda^2)^3}{\lambda^2(1-\lambda)^2}.\]}, the largest $n$ for which we can expect to have finitely many orbits is $n=2$, i.e. the case of planar conics. The classification of pencils of plane conics (i.e. the case $n=2, d=1$) was first obtained over $\RR$ and $\CC$ by Jordan in 1906 (see \cite{Jordan1}, \cite{Jordan2}, or see \cite[Ch. 16]{Berger} if you prefer English)\footnote{By contrast, the case $n=d=2$ (i.e. nets of plane conics) was investigated only recently in 2021 by Abdallah, Emsalem, and Iarobbino in \cite{AEI}. Here there is no dense orbit, but rather a 1-parameter family of dimension 8 orbits in $\GG(2,5)$, along with 14 other orbits. This family is parametrized by the $j$-invariant of the smooth plane cubic that is the locus of singular conics in that net.}. There are exactly 8 orbits in $\GG(1,5)$, which we call $\OO_{i}$ for $i=1,\dots, 8$. There are five orbits, $\OO_1, \OO_2,\OO_3,\OO_4,$ and $\OO_5$, of pencils containing at least one smooth conic, corresponding to the general pencils, simply tangent pencils, bitangent pencils, osculating pencils, and superosculating pencils respectively (c.f. \cite[\S 16.5]{Berger} types I-V) and three types $\OO_6, \OO_7, \OO_8$ of singular pencils (i.e. pencils containing only singular conics) described in detail in \S 2.2. The Hasse diagram of their inclusions (i.e. a poset of the $\OO_i$ with an arrow $\OO_i\to \OO_j$ iff $\OO_j$ is open in $\ol \OO_i\minus \OO_i$) is given in Figure \ref{Hasse}.

\begin{figure}[h]
    \centering
    \begin{tikzcd}
    &&\OO_3 \arrow{dr}&&\OO_6\arrow{dr}\\
    \OO_1\arrow{r}&\OO_2\arrow{ur}\arrow{dr}&&\OO_5\arrow{ur}&&\OO_8\\
    &&\OO_4\arrow{ur}\arrow{rr}&&\OO_7\arrow{ur}
    \end{tikzcd}
    \caption{Hasse Diagram of the $\PGL_3$ action on $\GG(1,5)$.}
    \label{Hasse}
\end{figure}
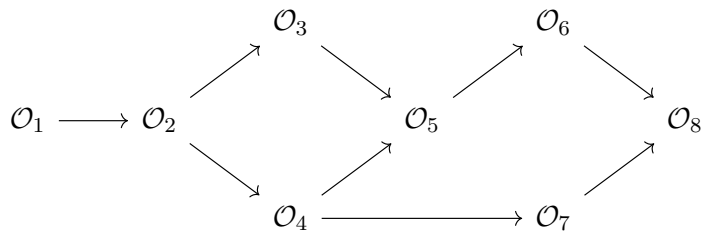

Each of these orbits has very interesting algebraic geometry associated to it. For instance:
\begin{enumerate}
    \item The ideals of $\ol \OO_5, \ol \OO_6, \ol \OO_7$ are generated by quadratic polynomials in the Plücker coordinates.
    \item The orbit $\OO_2$ is birational to a $\PP^3$ bundle over a cubic fourfold, namely the smooth locus $S_\sm$ of the secant variety $S$ to the Veronese surface.
    \item The orbit $\OO_3$ is birational to a $\PP^4$ bundle over $\PP^2$.
    \item The orbit $\OO_5$ is birational to bundle over $\PP^2$ with fibers quadric threefolds.
    \item The closure $\ol \OO_5$ is non-normal and singular along $\ol \OO_6$, which is isomorphic to the Hilbert scheme $\Hilb^2 \PP V^*$ of pairs of lines in the plane.

    \item The closure $\ol \OO_7$ is the Segre fourfold $\PP^2\times \PP^{2*}$ and $ \OO_8$ inside it is the universal line, and so $\OO_8$ is a $\PP^1$ bundle over $\PP^2$.
    \item The closures $\ol \OO_6$ and $\ol \OO_7$ are the irreducible components of the Fano variety of the cubic fourfold $S$.
\end{enumerate}
See \cite{HMS} or\cite[\S 2]{Lyu} for proofs of some of the facts above; most of these will also be reproven in the course of our discussion, see \S 3. In this paper, we compute the classes of the closures $\ol \OO_i$ in the Chow ring $A\GG(1,5)$ and their degrees under the Plücker embedding $\GG(1,5)\injto \PP^{14}$.\footnote{The Plücker degrees of the Schubert cycles are well-known and can be computed by the hook length formula (see \cite{Smirnov} Theorem 2.3.1). Since the Plücker degree of any other closed subscheme of the Grassmannian is the linear combination of its Chow ring coefficients obtained by plugging in the appropriate Plücker degrees of Schubert cycles, the Chow ring class is a more refined quantity associated to the orbits than its Plücker degree.
Some of these degrees can be found in \cite{FMS} Example 5.2 and \cite{HMS} Proposition 1.1, and it is a good sanity check that our methods, although very different from theirs, give the same results.} Our main result is:

\begin{theorem}
The classes in the Chow ring $A\GG(1,5)$ of the closures of the orbits $\OO_1,\dots, \OO_8$ described above, and their degrees under the Plücker embedding are summarized in Table \ref{results}.

\begin{table}[h]
\label{results}
    \centering
    \begin{tabular}{|c|c|c|c|c|}
    \hline Orbit & Base Locus Type &Codimension &$ A\GG(1,5)$ Class of Closure &Plücker Degree
    \\
    \hline
    $\OO_1$& $(1,1,1,1)$&0 &$\sigma_0$&14
    \\
    $\OO_2$& $(2,1,1)$ &1& $6\sigma_1$&84\\
    $\OO_3$&$(2,2)$ &2& $4\sigma_2$&36\\
    $\OO_4$&$(3,1)$&2&$6\sigma_2+9\sigma_{1,1}$&99\\
    $\OO_5$&$(4)$&3&$4\sigma_3+8\sigma_{2,1}$&56\\
    \hline
    $\OO_6$&$\{*\}$&4&$3\sigma_{3,1}+6\sigma_{2,2}$&21\\
    $\OO_7$&$L\cup\{*\}:*\notin L$&4&$6\sigma_{3,1}+3\sigma_{2,2}$&24\\
    $\OO_8$&$L\cup\{*\}:*\in L$&5&$6\sigma_{4,1}+6\sigma_{3,2}$&18\\\hline
    \end{tabular}
    \caption{Classes in $A\GG(1,5)$ of the $\PGL_3$ orbit closures $\ol \OO_i$ for $i=1,\dots, 8.$}
\end{table}
\end{theorem}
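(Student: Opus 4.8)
The plan is to work in the Chow ring $A\GG(1,5)$, whose additive basis is the Schubert classes $\sigma_{a,b}$ with $4\geq a\geq b\geq 0$, and to pin down each $[\ol\OO_i]$ by determining its finitely many integer coefficients in the Schubert basis of the appropriate codimension. Since $[\ol\OO_i]$ is of pure codimension $c_i$ (the value recorded in Table \ref{results} and established in \S 2--3), I may write $[\ol\OO_i]=\sum_{|\lambda|=c_i}a_\lambda\,\sigma_\lambda$, and by Poincar\'e duality on the Grassmannian each coefficient is an intersection number
\[
a_\lambda=\int_{\GG(1,5)}[\ol\OO_i]\cdot\sigma_{\hat\lambda},\qquad \hat\lambda=(4-\lambda_2,\,4-\lambda_1),
\]
where $\hat\lambda$ is the complementary partition (so that $\int_{\GG(1,5)}\sigma_\lambda\cdot\sigma_{\hat\mu}=\delta_{\lambda\mu}$). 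Computing the table thus reduces to a list of intersection numbers, each of which is a concrete enumerative count: the number of pencils of base-locus type $\OO_i$ (or in its closure) meeting a general Schubert cycle $\sigma_{\hat\lambda}$. These conditions translate directly into statements about linear systems of conics---passing through a fixed point of $\PP^5$ means the pencil contains a fixed conic, meeting a fixed $\PP^k\subset\PP^5$ means it shares a member with a fixed $k$-dimensional system, and being contained in a fixed $\PP^k$ means the whole pencil lies in that system.

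To evaluate these numbers I will use two complementary tools. The first is \emph{direct enumeration}: using the orbit descriptions from \S 2--3, I count the pencils of a given type satisfying the prescribed incidence conditions, reading off $a_\lambda$ when the count is finite and transverse. The second is a \emph{Chern-class/pushforward} computation built on the explicit birational models in the introduction. Writing $\mathcal{S}\subset\Sym^2V^*\otimes\OO$ for the tautological rank-$2$ subbundle, so $c(\mathcal{S}^*)=1+\sigma_1+\sigma_{1,1}$ and the special classes $\sigma_a$ are the Segre classes of $\mathcal{S}$, I realize a closure as the image of an explicit parameter space $M$ under a map $f\colon M\to\GG(1,5)$ and apply the projection formula $a_\lambda=(\deg f)^{-1}\int_M f^*\sigma_{\hat\lambda}$, computing $f^*\sigma_{\hat\lambda}$ via Giambelli's formula from the Chern classes of $f^*\mathcal{S}$. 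For $\OO_7$ the model is the embedding $f\colon\PP V^*\times\PP V\hookrightarrow\GG(1,5)$, $(L,p)\mapsto L\cdot U_p$ with $U_p\subset V^*$ the forms vanishing at $p$; here $f^*\mathcal{S}\cong p_1^*\OO_{\PP V^*}(-1)\otimes p_2^*\Omega_{\PP V}(1)$, so $f^*\sigma_1=2h+k$ (with $h,k$ the hyperplane classes on the two factors), $\int(2h+k)^4=24$ reproduces the Pl\"ucker degree, and integrating $f^*\sigma_{3,1}$ and $f^*\sigma_{2,2}$ yields the coefficients $6,3$. The divisor $\ol\OO_2$ is fastest of all: the fibrewise determinant is a section of $\Sym^3\mathcal{S}^*$ (a binary cubic), whose discriminant is a section of a power of $\det\mathcal{S}^*$, and tracking its weight gives $[\ol\OO_2]=6\sigma_1$ with no residual multiplicity since the degeneration along $\OO_2$ is a simple double root. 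The models for $\OO_6$ ($\Hilb^2\PP V^*$), $\OO_8$ (the universal line in $\PP V^*\times\PP V$), and $\OO_3,\OO_5$ (a $\PP^4$-bundle and a quadric-threefold bundle over $\PP^2$) are handled identically; $\OO_4$ requires either its own parameter space or a direct count.

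I will carry these out in order of increasing codimension and exploit the Hasse diagram: the already-computed low-codimension classes, together with Pieri's rule, give overdetermined relations on the deeper strata, and the final column of the table serves throughout as a consistency check, since $\deg_{\mathrm{Pl}}\ol\OO_i=\int[\ol\OO_i]\cdot\sigma_1^{\,8-c_i}$ and the Schubert Pl\"ucker degrees are known from the hook-length formula.

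The main obstacle I anticipate is \emph{transversality and multiplicity}. Because $\PGL(V)=\PGL_3$ sits inside $\PGL(\Sym^2V^*)=\PGL_6$ as a proper subgroup that is not transitive on $\GG(1,5)$---indeed not even on $\PP^5$, where it preserves the Veronese $X$ and the secant cubic $S$---Kleiman's transversality theorem does not apply to translates of Schubert cycles by the acting group, so I cannot take genericity of the intersections for granted. For every enumerative count I must verify by hand that the intersection of $\ol\OO_i$ with the chosen general Schubert cycle is finite, reduced, and of multiplicity one, and in the Chern-class computations I must confirm $\deg f=1$ and control the locus where $f$ fails to be injective so that no excess contribution is missed. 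This is most delicate for the singular and non-normal closures---$\ol\OO_5$ is singular along $\ol\OO_6$, and $\ol\OO_6,\ol\OO_7$ meet along the Fano locus of $S$---where selecting the correct resolution $M$ and accounting for any excess term is the crux of the argument.
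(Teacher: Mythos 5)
Your overall framework---expanding each $[\ol\OO_i]$ in the Schubert basis of its codimension and extracting coefficients as intersection numbers against complementary Schubert cycles, evaluated by a mix of direct enumeration and Chern-class computations on explicit birational models---is exactly the paper's strategy, and the two computations you actually carry out are correct. Your treatment of $\ol\OO_7$ via the model $f\colon\PP V^*\times\PP V\to\GG(1,5)$ with $f^*\CS\cong p_1^*\OO_{\PP V^*}(-1)\otimes p_2^*\Omega_{\PP V}(1)$, hence $f^*\sigma_1=2h+k$ and $f^*\sigma_{1,1}=h^2+hk+k^2$, is a genuinely different and cleaner route than the paper's, which either counts lines on the Cayley cubic together with a lemma on the three lines through a general point of the secant threefold of a rational normal quartic, or else subtracts $4[\ol\OO_6]$ from $[F_1(S)]=c_4(\Sym^3\CS^*)=18\sigma_{3,1}+27\sigma_{2,2}$; your route avoids both the multiplicity-four computation and the classical surface theory. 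Likewise reading $[\ol\OO_2]$ off as $c_1$ of $(\det\CS^*)^{\otimes 6}$ from the weight of the discriminant of a binary cubic is the intrinsic form of the paper's second proof of that proposition. One correction: Kleiman's theorem \emph{does} apply here in characteristic $0$, because the group used to move the Schubert cycle is $\PGL(\Sym^2 V^*)=\PGL_6$, which acts transitively on $\GG(1,5)$, while the orbit closure stays fixed; transitivity of $\PGL_3$ is irrelevant, so the hand-verification of transversality you budget for is needed only in positive characteristic, which the paper treats separately.

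The genuine gap is that for most orbits the proposal stops at the level of a plan. The claim that $\OO_3,\OO_5,\OO_6,\OO_8$ are ``handled identically'' to $\OO_7$ conceals real work: the parameter space for $\ol\OO_6$ is $\Hilb^2\PP V^*$, whose intersection theory is not a two-line pullback computation, and the quadric-threefold bundle over $\PP^2$ underlying $\OO_5$ is not a projective bundle, so there is no ready-made formula for $f^*\CS$ there. The paper instead evaluates these coefficients by concrete geometry (the $\binom{4}{2}=6$ secants to the four points of $X$ in a general $3$-plane, the dual conic mapping to a rational normal quartic giving $4$, the nine flexes of a plane cubic, the degree-$6$ tangent surface of a rational normal quartic, and so on), and none of these numbers appears in your write-up. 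Most seriously, for $\OO_4$ you explicitly defer to ``its own parameter space or a direct count'' without supplying either; this is precisely the coefficient where the direct count is delicate (it amounts to projecting the singular Cayley cubic $S\cap\Pi$ from a general point, so the classical count of six cusps on the branch sextic needs justification), and the paper's clean resolution is the bundle of relative principal parts $\SE^3=\SP^{2}_{\Phi/\GG(1,5)}(\pi_2^*\OO_{\PP^5}(3))$ on the flag variety, whose top Chern class $(6\sigma_2+9\sigma_{1,1})\zeta$ pushes forward to $6\sigma_2+9\sigma_{1,1}$. Some such device is needed to close the argument, and the proposal does not identify one.
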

\vspace{8pt}
The rest of this paper is organized as follows. In \S 2, we establish our notation and conventions, and then recall the classification of these orbits and show how this relates to the positions of these lines with respect to the Veronese surface and its secant variety. In \S 3, we give direct enumerative arguments to compute the Chow ring classes of these orbits. In \S 4, we give a second proof of most of these results by computing some Chern classes. In \S 5, we explain how to modify our arguments in the case of positive characteristic $p>2$. Finally, in \S 6, we present several suggestions for future work in this direction.

The author was supported by the Herchel Smith Harvard Undegraduate Research Program Summer 2020, and would like to thank Professor Joe Harris for his supervision and guidance.
\newpage 
\section{Set-Up}
\subsection{Notation and Conventions}

We work over a fixed algebraically closed field $K$ of characteristic 0.\footnote{For all practical purposes, $K=\CC$ (this is somewhat justified by the Lefschetz principle). Most of our results can be extended to algebraically closed fields of positive characteristic $p>2$ without much difficulty, although proofs of results invoking Kleiman's theorem would have to be replaced by a careful analysis of generic transversality of the corresponding cycles (see \S 5). The situation in characteristic 2 is genuinely quite wonky. For  starters, we cannot use the correspondence between plane conics and symmetric bilinear forms. Further, there are other pathologies, e.g. any smooth plane conic is a \textit{strange curve}, i.e. there is a unique point, called the \textit{strange point}, common to all tangents to the conic. We will not pursue characteristic 2 any further; see \cite{Vainsencher} for more on conics in characteristic 2, and see the work of Bhosle, e.g. \cite{Bhosle}, for more on pencils of quadrics in characteristic 2.} Let $V$ be a three dimensional $K$-vector space and let $\PP^2 = \PP V$ be the corresponding projective plane with coordinates $x,y,z$. By $\PP^5$, we always denote the space $\PP\Sym^2 V^*$ of conics on $\PP^2$, and give it coordinates $a,b,c,h,e,f$ (in that order) so that the universal conic \[\Xi=\{(C,p):p\in C\}\subset\PP^5\times \PP^2\]
is cut out by the equation \[ax^2+2hxy+by^2+2exz+2fyz+cz^2=0.\]
In other words, the symmetric bilinear form on $V$ corresponding to a conic with coordinates $a,b,c,h,e,f$ is defined by the matrix \[\begin{bmatrix}
a&h&e\\h&b&f\\e&f&c
\end{bmatrix}.\]
Let \[\Delta:=\det \begin{bmatrix}
a&h&e\\h&b&f\\e&f&c
\end{bmatrix}=abc+2hef - af^2-be^2-ch^2\in K[a,b,c,h,e,f]\]
be the determinant of this matrix and let $S=\VV(\Delta)\subset \PP^5$ be the cubic fourfold
of singular conics. Since the partial derivatives of $\Delta$ are exactly the six distinct $2\times 2$ minors of this matrix (upto scalars), it follows that the singular locus $X=S_{\sing}$ is exactly the locus of rank 1 conics. It is easy to see that $X$ is the Veronese surface (the image of the ``squaring map'' $\PP V^*\to \PP\Sym^2 V^*$), and that $S$ is its secant (and tangent) variety $S=S(X)=\tan(X)$, i.e. the union of all secant lines to $X$, which also happens to be the union of all tangent lines to $X$ (c.f. \cite[Ch. 11]{Harris}).

We let $\GG(1,5)$ denote the Grassmannian $\GG(1,\PP\Sym^2 V^*)$ of pencils of conics. For each $r\geq 0$, we denote by $\Phi_r$ the incidence correspondence \[\Phi_r:=\{(\ell,p):m_p(\ell, S)\geq r\}\subseteq \GG(1,5)\times \PP^5.\]
We let $\pi_1$ and $\pi_2$ denote the projections on $\GG(1,5)$ and $\PP^5$ respectively:

\begin{center}
    \begin{tikzcd}[column sep = small]
    &\Phi_r\arrow[swap]{dl}{\pi_1}\arrow{dr}{\pi_2}\\\GG(1,5)&&\PP^5
    \end{tikzcd}
\end{center}
Each $\Phi_r$ is clearly a closed subset; it is the vanishing locus of a certain global section $\tau_{\Delta}$ of a vector bundle $\SE^r$ on $\Phi$ (see \S \ref{section: another_computation}).  For $r=0$, this is all of $\GG(1,5)\times \PP^5$. For $r\geq 1$, this is a nonempty proper closed subset. Since $\deg S=3$, for $r\geq 4$ these loci all coincide and are simply \[\Phi_4:=\{(\ell,p):p\in \ell\subseteq S\}\subset \GG(1,5)\times \PP^5.\]

Finally, for each ordered pair $(a,b)$ of integers $4\geq a\geq b\geq 0$ and complete flag $\SV$ in $\PP^5$, we let $\Sigma_{a,b}(\SV)\subset \GG(1,5)$ denote the corresponding Schubert cycle. When $\SV$ is irrelevant, we simply omit it and write $\Sigma_{a,b}$. We let $\sigma_{a,b}=:=[\Sigma_{a,b}]\in A \GG(1,5)$. Finally, we let $\sigma_{a}:=\sigma_{a,b}$ as usual. Finally, we present the following helpful lemma:

\begin{lemma}
\label{lemma: pluecker_degree_lines}
    Let $N\geq 1$ be any integer. Then for any ordered pair $(a,b)$ of integers with $N-1\geq a\geq b\geq 0$, the Plücker degree of the Schubert cycle $\Sigma_{a,b}$ is \[\deg \Sigma_{a,b} = \binom{2N-2-a-b}{N-1-b}\frac{a-b+1}{N-b}.\]
\end{lemma}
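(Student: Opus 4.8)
The plan is to compute the Plücker degree as a top intersection number in $A\GG(1,N)$ (for the Grassmannian of lines in $\PP^N$, of dimension $2N-2$, so that the paper's case is $N=5$) and then to evaluate it combinatorially. Since the Plücker embedding is given by the line bundle whose first Chern class is the special Schubert divisor $\sigma_1$, the degree of the $d$-dimensional variety $\Sigma_{a,b}$, where $d=\dim\Sigma_{a,b}=2N-2-a-b$, is the intersection number $\deg\Sigma_{a,b}=\int_{\GG(1,N)}\sigma_{a,b}\cdot\sigma_1^{\,d}$, i.e. the coefficient of the point class $\sigma_{N-1,N-1}$ in the product $\sigma_{a,b}\,\sigma_1^{\,d}$. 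First I would record Pieri's rule in this two-row Grassmannian: $\sigma_1\cdot\sigma_{i,j}=\sigma_{i+1,j}+\sigma_{i,j+1}$, where a term is discarded whenever its index leaves the range $N-1\geq i'\geq j'\geq 0$.

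Iterating Pieri $d$ times and extracting the point class then expresses $\deg\Sigma_{a,b}$ as the number of monotone lattice paths from $(a,b)$ to $(N-1,N-1)$ that take $d$ unit steps, each increasing either the first or the second coordinate by $1$, and that stay within the index region. The key observation here is that the bound $i'\leq N-1$ is automatically enforced by the fixed step counts (exactly $N-1-a$ first-coordinate steps and $N-1-b$ second-coordinate steps), so the only genuine constraint is the diagonal condition $i'\geq j'$, which reflects the vanishing of $\sigma_{i',j'}$ whenever $j'>i'$.

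Thus $\deg\Sigma_{a,b}$ is the solution of a classical ballot problem: the number of such paths staying weakly below the diagonal. I would evaluate it by the reflection principle, reflecting the initial segment of a ``bad'' path across the line $j'=i'+1$; this sends the start $(a,b)$ to $(b-1,a+1)$ and bijects bad paths with all unconstrained paths from $(b-1,a+1)$ to $(N-1,N-1)$. Writing $M=2N-2-a-b$ and $k=N-1-b$, the total count is $\binom{M}{k}$ and the bad count is $\binom{M}{k+1}$, so $\deg\Sigma_{a,b}=\binom{M}{k}-\binom{M}{k+1}$.

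Finally I would simplify algebraically: factoring out $\binom{M}{k}$ gives $\binom{M}{k}\bigl(1-\tfrac{M-k}{k+1}\bigr)=\binom{M}{k}\tfrac{2k+1-M}{k+1}$, and substituting $2k+1-M=a-b+1$ together with $k+1=N-b$ recovers the stated formula $\binom{2N-2-a-b}{N-1-b}\tfrac{a-b+1}{N-b}$. I expect the main obstacle to be the bookkeeping in the reflection step, namely correctly identifying $j'=i'+1$ as the barrier and checking the degenerate cases (e.g. $a=N-1$, where there are no bad paths and the second binomial $\binom{M}{k+1}$ indeed vanishes), rather than either the Pieri setup or the concluding algebra.
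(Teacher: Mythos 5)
Your proof is correct, but it takes a genuinely different route from the paper. The paper simply quotes the hook length formula for the number of standard Young tableaux (citing Smirnov, Theorem 2.3.1) and simplifies the resulting factorial quotient; you instead derive the count from scratch by iterating Pieri's rule, identifying the degree with a ballot-type lattice path count, and evaluating it by Andr\'e's reflection principle to get $\binom{M}{k}-\binom{M}{k+1}$ with $M=2N-2-a-b$, $k=N-1-b$. Your bookkeeping checks out: the reflection across $j'=i'+1$ sends $(a,b)$ to $(b-1,a+1)$, the bad-path count is $\binom{M}{N-2-a}=\binom{M}{k+1}$, and the algebraic simplification via $2k+1-M=a-b+1$ and $k+1=N-b$ recovers the stated formula (and reproduces the values $14, 14, 9, 5, 4, 3, 2,\dots$ used later in the paper). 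Your observation that the constraint $i'\leq N-1$ is automatic because the path is monotone with prescribed endpoint is also right, as is the degenerate check at $a=N-1$. What your approach buys is a self-contained, citation-free proof that in effect reproves the hook length formula in the two-row case; what the paper's approach buys is brevity, since the identity is classical and the only work left is the (routine but slightly fiddly) simplification of the hook-length product. Either is acceptable here; yours is arguably preferable in a paper that otherwise leans on explicit enumerative arguments.
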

\begin{proof}
    By the hook length formula (\cite{Smirnov} Theorem 2.3.1), we have \[\deg \Sigma_{a,b} = \frac{(2N-2-a-b)!}{
    \left(\prod_{k=1}^{N-1-a} (N+1-b-k)\right)\left(\prod_{k=N-a}^{N-1-b}(N-b-k)\right)\left(\prod_{k=1}^{N-1-a} (N-a-k)\right)
    },\]
    which simplifies to the suggested quantity.
\end{proof}
\subsection{Classification of $\PGL_3$-orbits and Hasse Diagram}

In this section, we recall the classification of $\PGL_3$-orbits, interpret them geometrically based on the position of the line $\ell\subset \PP^5$ with respect to $X$ and $S$, and figure out the Hasse diagram of the orbit closures. First recall the following definition:

\begin{definition}
Given a pencil $\ell$ of conics in $\PP^2$, its \textit{base locus} is the closed subscheme \[B_\ell:=\bigcap_{C\in \ell}C\subset \PP^2.\] 
\end{definition}

This is, of course, also the intersection of any two distinct members of the pencil. Suppose first that $\ell$ contains a smooth conic, say $C_0$; in this case, if $C_1$ is any other member of $\ell$, then the base locus $B_\ell=C_0\cap C_1$ is a 0-dimensional subscheme of $\PP^4$ of length 4. The underlying reduced subscheme therefore is a union of at most 4 points. Up to $\PGL_3$-equivalence, there are exactly five such subschemes, and they are determined entirely by the multiplicities of the points, which must form a partition of 4, and hence be given by one of the following tuples: \[(1,1,1,1), (2,1,1), (2, 2), (3,1), (4).\]

\begin{definition}
Given a pencil $\ell$ of conics containing a smooth conic, we define the \textit{type} of $\ell$ to be the partition of 4 corresponding to the multiplicites of the points in its base locus.
\end{definition}

 Clearly, all five of these types are possible (simply take any two conics intersecting in such a subscheme), and the type of base locus is a $\PGL_3$-invariant of the pencil. This is in fact the \textit{only} invariant of pencils containing a smooth member, and this gives us five orbits of pencils not contained in $S$.

\begin{theorem}[Algebraic Description I]
There are exactly five $\PGL_3$-orbits of pencils of conics containing at least one nonsingular member. These are given by:
\begin{enumerate}
    \item The orbit $\OO_1=\PGL_3\langle x^2+y^2, x^2+z^2\rangle$ with base locus of type $(1,1,1,1)$ (general pencil).
    \item The orbit $\OO_2=\PGL_3\langle x^2+y^2, xz\rangle$ with base locus of type $(2,1,1)$ (simply tangent pencil).
    \item The orbit $\OO_3=\PGL_3\langle x^2, yz\rangle$ with base locus of type $(2,2)$ (bitangent pencil).
    \item The orbit $\OO_4=\PGL_3\langle x^2+yz, xz\rangle$ with base locus of type $(3,1)$ (osculating pencil).
    \item The orbit $\OO_5=\PGL_3\langle x^2, y^2+xz\rangle$ with base locus of type $(4)$ (superosculating pencil).
\end{enumerate}
\end{theorem}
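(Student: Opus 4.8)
The plan is to prove that the base locus type, already shown to be a $\PGL_3$-invariant taking the five values $(1,1,1,1),(2,1,1),(2,2),(3,1),(4)$, is in fact a \emph{complete} invariant of pencils containing a smooth member. Combined with the fact (noted above) that all five types occur, this yields exactly five such orbits, and a direct computation of the base loci of the listed representatives then identifies them.

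First I would show that such a pencil is recovered from its base locus, so that the classification reduces to one about length-$4$ subschemes of $\PP^2$. Fix a pencil $\ell$ with smooth member $C_0$ and a second member $C_1$. Since $C_0$ is irreducible and $C_1\neq C_0$, the two share no component, so $B_\ell=C_0\cap C_1$ is a complete intersection and admits the Koszul resolution \[0\to \OO(-4)\to \OO(-2)^{\oplus 2}\to \mathcal{I}_{B_\ell}\to 0.\] Twisting by $\OO(2)$ and using $H^0(\OO(-2))=H^1(\OO(-2))=0$ on $\PP^2$, the long exact sequence gives $H^0(\mathcal{I}_{B_\ell}(2))\cong H^0(\OO^{\oplus 2})=K^2$. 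Thus the conics through $B_\ell$ form a pencil, which can only be $\ell$ itself. As $\ell\mapsto B_\ell$ is $\PGL_3$-equivariant, two pencils with a smooth member are $\PGL_3$-equivalent if and only if their base loci are.

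Next I would classify these base loci. Each $B_\ell$ is an effective divisor of degree $4$ on the smooth rational curve $C_0$, and its type is precisely the partition recording the multiplicities of its points; in particular every such subscheme is curvilinear, so at a fat point its scheme structure is exactly the osculating data of $C_0$ there (a tangent direction at a double point, a second-order jet at a triple point, and so on). The goal is then to show that, for each partition, all divisors of that type form a single $\PGL_3$-orbit. For type $(1,1,1,1)$ the four points are distinct and, lying on a smooth conic, have no three collinear (a line meets the conic in only two points), so they form a projective frame and are $\PGL_3$-equivalent; it is worth emphasizing that this uses the \emph{full} group $\PGL_3$, since under the stabilizer of $C_0$ alone (a copy of $\PGL_2$) the cross-ratio of the four points would survive as a continuous modulus. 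For the non-reduced types one fixes the support at a standard non-collinear configuration and checks that the stabilizing torus acts transitively on the admissible jet data, where admissibility records the constraint that a tangent direction cannot point along a line through another base point, as such a line would again meet the smooth conic with multiplicity $\geq 3$.

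The hard part will be this last transitivity analysis: one must verify case by case that no continuous invariant survives in any of the five types, with the non-reduced loci requiring careful bookkeeping of the curvilinear scheme structure. Finally I would close the loop by computing the base locus of each listed representative --- for instance $\OO_5=\PGL_3\langle x^2, y^2+xz\rangle$ has base locus supported at $(0{:}0{:}1)$ with local ring $K[y]/(y^4)$, of type $(4)$ --- confirming that the five orbits are exactly those tabulated. As an alternative to the base-locus route, one could instead invoke the classical Weierstrass--Segre normal form for a pencil of symmetric $3\times 3$ matrices with a nonsingular member and read off the five admissible Segre symbols; I prefer the geometric argument above because it produces the base-locus \emph{type} directly, which is the invariant recorded in the theorem.
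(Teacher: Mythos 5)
Your route is sound and, notably, supplies more than the paper does: for this theorem the paper gives no argument at all, deferring entirely to Berger (Prop.\ 16.5.1) and Samuel (\S 1.7), so there is no in-text proof to compare against. Your key reduction --- that a pencil with a smooth member is recovered from its base locus because $\dim H^0(\mathcal{I}_{B_\ell}(2))=2$ via the Koszul resolution of the complete intersection $C_0\cap C_1$ --- is correct and cleanly executed, and it legitimately converts the problem into the $\PGL_3$-classification of the length-$4$ curvilinear schemes arising as such base loci; you are also right to flag that the type-$(1,1,1,1)$ case needs the full group $\PGL_3$ rather than the stabilizer of a single conic, where the cross-ratio would survive. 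What remains to be written out is the transitivity check for the four non-reduced types, and this is where the real (if routine) work lives. One point to make explicit there: for types $(3,1)$ and $(4)$ the admissibility conditions go beyond ``no tangent direction along a line through another base point'' --- you must also record that the length-$3$ (resp.\ length-$4$) curvilinear piece is not contained in a line, i.e.\ that its second-order jet datum is nonzero, since a smooth conic meets a line in a scheme of length at most $2$; transitivity of the relevant stabilizer on the nonzero jet data is then a short computation in affine coordinates (a scaling normalizes the second-order term, a shear kills the third-order term). With that caveat the plan closes correctly, your verification of the representative for $\OO_5$ is right, and the alternative you mention (Weierstrass--Segre normal forms) is essentially the route taken by the classical sources the paper cites.
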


For a proof, see \cite{Berger} Proposition 16.5.1 or \cite{Samuel} \S 1.7. From this description of the base locus, this portion of the Hasse diagram is immediate, see Figure \ref{fig: first_part_of_hasse}.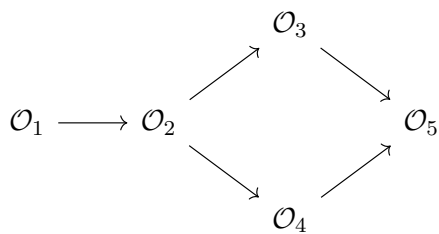
\begin{figure}[h]
    \centering
    \begin{tikzcd}
    &&\OO_3 \arrow{dr}\\
    \OO_1\arrow{r}&\OO_2\arrow{ur}\arrow{dr}&&\OO_5\\
    &&\OO_4\arrow{ur}
    \end{tikzcd}
    \caption{Hasse Diagram of the orbits containing at least one smooth conic.}
    \label{fig: first_part_of_hasse}
\end{figure}

We can also this interpret this theorem as a classification via the position of the line relative to the secant variety $S$ to the Veronese surface. Given a line $\ell\subset \PP^5$ not contained in $S$, let $C,C'\in \ell$ be two distinct elements and $Q, Q'$ be the corresponding symmetric $3\times 3$ matrices, so that the matrices $sQ+tQ'$ for $[s,t]\in \PP^1$ give a parametrization of $\ell$. The pullback of $\Delta$ to $\PP^1$ is exactly $\det(sQ+tQ')$, which is a cubic polynomial in $s$ and $t$.\footnote{This already proves the not-entirely-trivial fact that if a pencil contains at least one smooth member, then it has at most three singular elements; this corresponds to the geometric statement that a line $\ell$ not contained in $S$ meets it in at most $\deg S = 3$ distinct points.} The roots of this cubic polynomial then tell us how $\ell$ interesects $S$. For instance, for type $\OO_2$, we can take $C=\VV(x^2+y^2)$ and $C'=\VV(2xz)$ and so\[Q = \begin{bmatrix}1&0&0\\0&1&0\\
0&0&0\end{bmatrix} \text{ and }Q'= \begin{bmatrix}0&0&1\\0&0&0\\
1&0&0\end{bmatrix}.\]
Then \[sQ+tQ'=\begin{bmatrix}s&0&t\\0&s&0\\
t&0&0\end{bmatrix}\implies \det(sQ+tQ') = -st^2.\]
This tells us that this $\ell$ intersects $S$ in two smooth points, once with multiplicity 1 and another with multiplicity 2. Carrying out the analysis for all five types above, we get a geometric characterization of the orbits:

\begin{theorem}[Geometric Description I] The five $\PGL_3$-orbits of lines $\ell\subset \PP^5$ not contained in $S$ are characterized by their position relative to $S$, namely:
\begin{enumerate}
    \item $\OO_1$ is the orbit of lines intersecting $S$ transversely in three smooth points,
    \item $\OO_2$ is the orbit of lines intersecting $S$ in two smooth points, once with multiplicity 2,
    \item $\OO_3$ is the orbit of lines intersecting $S$ in one singular and one smooth point,
    \item $\OO_4$ is the orbit of lines intersecting $S$ in a unique smooth point with multiplicity 3, and
    \item $\OO_5$ is the orbit of lines intersecting $S$ in a unique singular point with multiplicity 3.
\end{enumerate}
\end{theorem}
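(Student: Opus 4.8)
The plan is to leverage Algebraic Description I. Since that theorem already produces exactly five orbits $\OO_1,\dots,\OO_5$ with explicit generators, I do not need to reprove finiteness or exhaustiveness; I only need to attach to each orbit its ``intersection type'' with $S$ and check that these five types are the ones listed and are pairwise distinct. The key observation is that this intersection type is a $\PGL_3$-invariant: because $\PGL_3$ acts on $\PP^5$ by linear substitutions preserving both $S$ and its singular locus $X=S_{\sing}$, it carries a line $\ell$ to a line $g\ell$ by an isomorphism taking $\ell\cap S$ onto $(g\ell)\cap S$ while respecting intersection multiplicities and the rank of the conic at each intersection point. Hence it suffices to compute, for each of the five given representatives, the multiset of points of $\ell\cap S$, recording at each point both its intersection multiplicity and whether it is a smooth or singular point of $S$.

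First I would record the two facts that reduce this to linear algebra. Writing $\ell=\langle C,C'\rangle$ with symmetric matrices $Q,Q'$ and parametrizing by $sQ+tQ'$, the restriction $\Delta|_\ell=\det(sQ+tQ')$ is a binary cubic, nonzero since $\ell\not\subseteq S$, and $\ell\cap S$ is its zero scheme in $\ell\cong\PP^1$. So the two discriminating pieces of data at each point $p=[s_0,t_0]$ are: (i) the order of vanishing of $\det(sQ+tQ')$ at $p$, which equals $m_p(\ell,S)$; and (ii) the rank of the matrix $s_0Q+t_0Q'$, which is $1$ exactly when $p$ is a singular point of $S$ (i.e. $p\in X$) and $2$ when $p$ is a smooth point of $S$ (since every point of $S$ has rank $\le 2$, these are the only possibilities).

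Then I would feed the five representatives through this machine, exactly as was done just above for $\OO_2$ (there $\det(sQ+tQ')=-st^2$ and both roots give rank-$2$ matrices, hence two smooth points). For $\OO_1=\langle x^2+y^2,\,x^2+z^2\rangle$ the two matrices are diagonal, so $sQ+tQ'$ is diagonal with entries $s+t,\,s,\,t$ and $\det(sQ+tQ')=st(s+t)$ has three distinct simple roots, each of rank $2$: three transverse smooth points. For $\OO_3$, $\OO_4$, and $\OO_5$ the analogous determinants come out proportional to $st^2$, $s^3$, and $t^3$ respectively; computing the rank of $s_0Q+t_0Q'$ at each root then shows that $\OO_3$ meets $S$ in one rank-$1$ (singular) and one rank-$2$ (smooth) point, that $\OO_4$ meets $S$ in a single rank-$2$ point with multiplicity $3$, and that $\OO_5$ meets $S$ in a single rank-$1$ point with multiplicity $3$. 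These are precisely the five descriptions in the statement.

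The one genuinely conceptual point, and the step where a careless argument would fail, is that intersection multiplicity alone does not separate the orbits: $\OO_2$ and $\OO_3$ both yield the root-multiplicity pattern $\{1,2\}$, and $\OO_4$ and $\OO_5$ both yield a single triple root. What distinguishes the members of each pair is fact (ii), the rank of the conic at the intersection point, i.e. whether $\ell$ meets $S$ along the smooth locus $S_{\sm}$ or along the Veronese $X=S_{\sing}$. So the crux is the identification of the singular points of $S$ with the rank-$1$ conics, which is exactly the observation recalled in \S 2.1 that the partial derivatives of $\Delta$ are, up to scalars, the $2\times2$ minors of the symmetric matrix, so that $S_{\sing}=X$. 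With that identification in hand, everything else is the short determinant-and-rank computation summarized above.
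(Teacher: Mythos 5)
Your proposal is correct and follows essentially the same route as the paper: restrict $\Delta$ to the line to get the binary cubic $\det(sQ+tQ')$, read off intersection multiplicities from its roots and smoothness versus singularity of the intersection points from the rank of $s_0Q+t_0Q'$ (using $S_{\sing}=X=\{\text{rank }1\}$), and run this through the five representatives of Algebraic Description I. You carry out all five computations and make explicit the $\PGL_3$-invariance of the intersection type and the need for rank to separate $\OO_2$ from $\OO_3$ and $\OO_4$ from $\OO_5$, where the paper only works out $\OO_2$ and leaves the rest as "the same analysis."
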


Next, we look at the singular pencils (i.e. pencils consisting only of singular members). In this case, the base locus can be positive dimensional. 

\begin{theorem}[Algebraic Description II]
There are exactly 3 $\PGL_3$-orbits of singular pencils of conics. These are given by:
\begin{enumerate}
    \item The orbit $\OO_6 = \PGL_3\langle x^2, y^2\rangle$, the unique orbit of singular pencils of base loci consisting of a single point.
    \item The orbit $\OO_7 = \PGL_3\langle xy,xz\rangle$, of pencils with base locus consisting of a line $L$ and point $*\notin L$.
    \item The orbit $\OO_8 = \PGL_3\langle x^2, xy\rangle$, of pencils with base locus consisting of a line $L$ and an embedded point $*\in L$.
\end{enumerate}
\end{theorem}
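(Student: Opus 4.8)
The plan is to pass to linear algebra and then split on the dimension of the base locus. A singular pencil is exactly a line $\ell\subset S=\VV(\Delta)$, i.e.\ a two-dimensional space $\langle Q_0,Q_1\rangle$ of symmetric $3\times 3$ matrices with $\det(sQ_0+tQ_1)\equiv 0$, and $\PGL_3$ acts by congruence $Q\mapsto g^{\top}Qg$. First I would record that a general member has rank exactly $2$: the pencil cannot lie inside the Veronese surface $X$ of rank-$1$ conics because $X$ contains no lines (a line of $\PP^5$ lying on $X\cong\PP^2$ would pull back to a degree-$\tfrac12$ subscheme under $\OO_{\PP^2}(2)$), so a general member is an honest pair of distinct lines with a well-defined \emph{node} (its unique singular point) $\nu(C)=[\ker Q_C]\in\PP^2$.

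The key lemma, and the step I expect to be the main obstacle, is that \emph{the node of a general member lies on every member of the pencil}. I would prove this by expanding
\[\det(Q_0+tQ_1)=\det Q_0+t\,\mathrm{tr}\!\left(\mathrm{adj}(Q_0)\,Q_1\right)+t^2\,\mathrm{tr}\!\left(\mathrm{adj}(Q_1)\,Q_0\right)+t^3\det Q_1.\]
Identical vanishing forces the coefficient of $t$ to be zero, and since $Q_0$ has rank $2$ we have $\mathrm{adj}(Q_0)=c\,v_0v_0^{\top}$ with $v_0$ spanning $\ker Q_0$ and $c\neq 0$; the vanishing then reads $Q_1(v_0,v_0)=0$, i.e.\ $[v_0]\in\VV(Q_1)$. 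Applying this to every member shows the node $\nu(C_0)$ of a general member lies in the base locus $B_\ell$.

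With the node lemma in hand I would split on $\dim B_\ell$, which is always $0$ or $1$ since two distinct plane conics meet in a nonempty scheme of dimension $\le 1$. If $\dim B_\ell=0$, then the node map $\nu$, a morphism on the dense open locus where the rank equals $2$, has image inside the finite set $B_\ell$ and is therefore constant; a constant node is a common kernel vector $v$, so every conic is singular at the fixed point $p=[v]$. Choosing coordinates with $p=[0:0:1]$ identifies such conics with binary quadratic forms in $x,y$, and the pencil becomes a line in the $\PP^2=\PP\Sym^2\langle x,y\rangle$ of such forms. Because $B_\ell$ is zero-dimensional there is no common linear factor, so this line is not tangent to (hence is a secant of) the smooth discriminant conic $\mathcal D$ of perfect squares; its two distinct double-line members can be normalized to $x^2,y^2$, giving $\OO_6$. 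If instead $\dim B_\ell=1$, the GCD of two members is a line $L=\VV(g)$, so $g$ is a common factor and the pencil equals $g\cdot U$ for a two-dimensional space $U$ of linear forms with $\VV(U)=\{*\}$; the base locus is $L\cup\{*\}$, and the subcases $*\notin L$ and $*\in L$ reduce by an evident change of coordinates to $\langle xy,xz\rangle=\OO_7$ and $\langle x^2,xy\rangle=\OO_8$ respectively.

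Finally I would check that these three families are genuinely distinct orbits and that each is a single orbit. Distinctness is immediate from the base-locus data, which is a $\PGL_3$-invariant: a single reduced point ($\OO_6$), a line together with an off-line point ($\OO_7$), and a line carrying an embedded point ($\OO_8$) are pairwise non-isomorphic as subschemes of $\PP^2$. Transitivity within each type is exactly the coordinate normalization carried out above. To reiterate, the crux is the node lemma together with its use to collapse the zero-dimensional case; everything else is either a dimension count on $B_\ell$ or a direct normalization of coordinates.
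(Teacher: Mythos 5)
Your proof is correct, but it is worth flagging that the paper does not actually prove this theorem: it simply cites Samuel \S 1.7 (and separately derives the geometric reformulation in terms of the position of $\ell$ relative to $X$). So your argument is a genuinely self-contained alternative. Its key lemma --- that the vertex $[\ker Q_0]$ of any rank-$2$ member is a base point of the pencil, extracted from the $t$-coefficient $\mathrm{tr}(\mathrm{adj}(Q_0)Q_1)=c\,Q_1(v_0,v_0)$ of $\det(Q_0+tQ_1)\equiv 0$ --- is exactly the right mechanism, and the subsequent dichotomy on $\dim B_\ell$ cleanly separates $\OO_6$ (constant vertex, pencil of binary quadratics meeting the discriminant conic in two distinct points) from $\OO_7,\OO_8$ (common linear factor, with the residual point off or on the fixed line). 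Each step checks out: the rank-$1$ locus $X$ contains no lines, so a general member has rank exactly $2$; the vertex map is a morphism on that open locus and is constant when $B_\ell$ is finite; tangency of the line in $\PP\Sym^2\langle x,y\rangle$ to the conic of squares is equivalent to a common linear factor, which is excluded when $\dim B_\ell=0$; and the coordinate normalizations realize transitivity while the base-locus invariants separate the three families. What your route buys beyond the citation is twofold: it produces the base-locus descriptions in the theorem statement as a byproduct rather than as an afterthought, and every ingredient (the adjugate identity, the secant/tangent dichotomy for a smooth conic) is valid over any algebraically closed field of characteristic $p>2$, which matters for \S 5. Two spots you might tighten in a written version: justify $\mathrm{adj}(Q_0)=c\,v_0v_0^{\top}$ with $c\neq 0$ (rank of the adjugate of a rank-$2$ matrix is $1$, $Q_0\,\mathrm{adj}(Q_0)=0$ forces its column space to be $\ker Q_0$, and symmetry gives the dyad), and make explicit that once two independent members annihilate $v_0$, every member does, so the rank-$1$ members are also singular at the common vertex.
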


A proof can be found in \cite{Samuel} \S 1.7. Doing the same analysis as before shows that we can understand this theorem as a classification via the position of the line relative to the Veronese surface $X$.
\begin{theorem}[Geometric Description II]
The three $\PGL_3$-orbits of lines $\ell\subset \PP^5$ contained in $S$ are characterized by their positions relative to $X$, namely:
\begin{enumerate}
    \item $\OO_6$ is the orbit of lines intersecting $X$ in two distinct points,
    \item $\OO_7$ is the orbit of lines not intersecting $X$ at all, and
    \item $\OO_8$ is the orbit of lines tangent to $X$.\footnote{A line cannot meet the Veronese surface in a positive dimensional scheme or a zero-dimensional scheme of length 3 or higher because the Veronese surface is cut out by quadrics and contains no lines (because of degree reasons).}
\end{enumerate}
\end{theorem}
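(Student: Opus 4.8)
The plan is to reduce the statement to a direct computation on the three orbit representatives furnished by Algebraic Description II. The crucial point is that the Veronese surface $X$ is \emph{intrinsically} defined as the locus of rank-$1$ conics, and the rank of a symmetric bilinear form is unchanged under the $\PGL_3$-action; hence $X$ is $\PGL_3$-invariant, and the scheme-theoretic intersection $\ell\cap X$ — in particular its length and whether it is reduced — depends only on the orbit of $\ell$. Since every line contained in $S$ lies in exactly one of $\OO_6,\OO_7,\OO_8$, it therefore suffices to compute $\ell\cap X$ for a single representative of each orbit and to check that the three answers are pairwise distinct and match the three stated positions.

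For the computation I would parametrize each representative pencil $\langle A,B\rangle$ by $[s,t]\mapsto sA+tB$, write out the associated symmetric $3\times 3$ matrix, and use that a conic lies on $X$ iff its matrix has rank $1$, i.e.\ iff all six $2\times 2$ minors vanish. Pulling these minors back along the parametrization produces a homogeneous ideal $I\subseteq K[s,t]$ cutting out $\ell\cap X$ inside $\ell\cong\PP^1$, and the length and reducedness of $\ell\cap X$ can then be read off directly from $I$. Conceptually, every member of a pencil contained in $S$ is a singular conic, hence a (possibly degenerate) pair of lines, and the rank-$1$ members are precisely the \emph{double} lines; so $\ell\cap X$ records exactly which members of the pencil are perfect squares. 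For $\OO_6=\PGL_3\langle x^2,y^2\rangle$ the matrix of $sx^2+ty^2$ is diagonal with entries $s,t,0$, so the unique surviving minor pulls back to a scalar multiple of $st$, vanishing simply at the two distinct points $[1,0]$ and $[0,1]$ (namely $x^2$ and $y^2$); thus $\ell$ meets $X$ in two distinct points. For $\OO_7=\PGL_3\langle xy,xz\rangle$ every member factors as $x(sy+tz)$, a pair of \emph{distinct} lines for all $[s,t]\in\PP^1$, so no member is a double line and $\ell\cap X=\emptyset$. For $\OO_8=\PGL_3\langle x^2,xy\rangle$ every member factors as $x(sx+ty)$, which degenerates to a double line only at $[1,0]$, and there the relevant minor pulls back to a scalar multiple of $t^2$; hence $\ell\cap X$ is a length-$2$ scheme supported at the single point $x^2$, i.e.\ $\ell$ is tangent to $X$.

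Assembling these three computations, I conclude that the orbits $\OO_6,\OO_7,\OO_8$ are separated by their position relative to $X$ exactly as asserted. Moreover these three positions exhaust all possibilities for a line contained in $S$: because $X$ is cut out by quadrics and contains no lines, any line meets $X$ in a scheme of length at most $2$ (as noted in the footnote), and we have exhibited representatives realizing the empty intersection, the reduced length-$2$ intersection, and the tangential (non-reduced length-$2$) intersection. Together with the $\PGL_3$-invariance of $\ell\cap X$, this makes the correspondence between orbits and positions a bijection, proving the theorem.

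The only place demanding genuine care — the main obstacle — is the scheme-theoretic bookkeeping in the $\OO_8$ case: one must verify not merely that $x^2$ is the unique rank-$1$ member of the pencil, but that the pulled-back ideal of minors is $(t^2)$ rather than $(t)$, since it is precisely this doubling that distinguishes the tangential intersection defining $\OO_8$ from the transverse single-point intersection one might naively expect. Once the rank condition is translated into the vanishing of the $2\times 2$ minors, the $\OO_6$ and $\OO_7$ cases are immediate.
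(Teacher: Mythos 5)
Your proposal is correct and takes essentially the same route as the paper: the paper relies on the cited algebraic classification of singular pencils and then states that ``doing the same analysis as before'' --- i.e., pulling back the rank-one condition (the $2\times 2$ minors) along a parametrization $sA+tB$ of each orbit representative --- yields the geometric description, which is exactly the computation you carry out. Your explicit check that the ideal in the $\OO_8$ case is $(t^2)$ rather than $(t)$ fills in a scheme-theoretic detail the paper leaves implicit.
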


In this case, the specialization results are not entirely obvious. The specialization $\OO_7\to \OO_8$ is easy to see from the base locus (algebraic) description, whereas the specialization $\OO_6\to \OO_8$ is easy to see from the geometric description. We will see below that $\OO_6$ and $\OO_7$ are both 4-dimensional and so cannot be comparable for the specialization partial order. Next, considering the family $\langle x^2, y^2+txz\rangle$ for $t\in \AA^1$ gives us the specialization $\OO_5\to \OO_6$. Similarly, considering the family $\langle xz, yz + tx^2\rangle$ for $t\in \AA^1$ gives us the specialization $\OO_4\to \OO_7$. What is also true but perhaps less obvious is that $\OO_5$ does not specialize to $\OO_7$; indeed, this is follows from the geometric description, since interesecting the Veronese surface $X$ is a closed condition in the Grassmannian. Putting all of this together, and combining it with the dimension computations of the orbits in the next section, we obtain the Hasse diagram, Fig. \ref{Hasse}, of orbit inclusions; c.f. \cite{HMS} \S 1.2 and \cite{AEI} \S 3.2. Finally, we also see that $\ol\OO_6$ and $\ol \OO_7$ are the two irreducible components of the Fano variety of $S$, as described in \cite[\S 2]{Lyu}. We will see in \S 3 that the Fano scheme of $S$ consists of $\ol\OO_6$ with multiplicity four and $\ol\OO_7$ with multiplicity one.

\newpage
\section{Calculations}

\subsection{Orbit $\OO_1$}

 The orbit $\OO_1$ is the orbit of pencils with base locus type $(1,1,1,1)$ and corresponds to lines $\ell\subset \PP^5$ meeting $S$ transversely in three smooth points. Therefore, $\OO_1$ is a nonempty open (and hence dense) orbit. Equivalently, if $U\subset \Hilb^4 \PP^2$ is the open subset of points in linear general position, then the map $U\to \GG(1,5)$ sending a base locus to the pencil of conics through those 4 points is an open embedding whose image is exactly $\OO_1$. The Plücker degree of its closure is the Plücker degree of $\GG(1,5)$ itself, namely 14, computed say using Lemma \ref{lemma: pluecker_degree_lines} as $\deg \Sigma_{0,0}$.

\subsection{Orbit $\OO_2$}

This is the orbit of pencils corresponding to lines simply tangent to $S$ at a smooth point.

\begin{proposition}
\label{proposition: orbit_O2}
The closure $\ol\OO_2$ is the variety $\ST_1(S)$ of lines tangent to $S$. In particular, $\OO_2$ is birational to a $\PP^3$-bundle over $S_\sm$, so $\dim \OO_2=7$.
\end{proposition}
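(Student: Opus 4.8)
The plan is to realize $\ST_1(S)$ as the image of an explicit $\PP^3$-bundle and to show that $\OO_2$ is dense in that image. First I would form the incidence variety
\[
I := \{(\ell, p) \in \GG(1,5)\times S_\sm : p\in \ell\subseteq T_pS\},
\]
where $T_pS\subset \PP^5$ is the embedded tangent hyperplane to $S$ at the smooth point $p$. Note $p\in T_pS$ automatically: writing $S=\VV(\Delta)$, the hyperplane $T_pS$ is cut out by $\sum_i \partial_i\Delta(p)\,x_i$, which vanishes at $p$ by Euler's relation since $\Delta$ is homogeneous of degree $3$ and $\Delta(p)=0$. The tangent hyperplanes assemble into a rank-$5$ subbundle $\mathcal{T}\subseteq \mathcal{O}_{S_\sm}^{\oplus 6}$ of the trivial bundle with fiber $\Sym^2V^*$, and $\mathcal{T}$ contains the tautological line subbundle $\mathcal{L}$ with fiber $\langle p\rangle$. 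Lines through $p$ lying in $T_pS$ are then classified by $\PP(\mathcal{T}/\mathcal{L})$, so $I$ is a $\PP^3$-bundle over $S_\sm$. Since $S$ is an irreducible cubic fourfold, $S_\sm$ is smooth and irreducible of dimension $4$, and hence $I$ is irreducible of dimension $7$.

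Next I would analyze the two projections. By construction the image of $\pi_1\colon I\to \GG(1,5)$ is the set of lines tangent to $S$ at some smooth point, whose closure is $\ST_1(S)$. The crux is to identify the generic member of $I$. For $(\ell,p)\in I$ the intersection $\ell\cap S$ is a degree-$3$ divisor on $\ell\cong\PP^1$ containing $2p$ (because $\ell\subseteq T_pS$ at the smooth point $p$ forces $m_p(\ell,S)\geq 2$), so it equals $2p+q$ for a residual point $q$. For a \emph{general} $(\ell,p)$ the tangency is simple, i.e. $m_p(\ell,S)=2$ and not $\geq 3$, and $q$ is a smooth point of $S$ distinct from $p$; each failure is a proper closed condition on the irreducible $7$-fold $I$. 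Indeed $\{m_p\geq 3\}$ is proper because $\OO_2$ is nonempty and its members are simply tangent (the explicit representative $\langle x^2+y^2, xz\rangle$ gives $\det(sQ+tQ')=-st^2$, a simple plus a double root, so $m_p=2$); and $\{q\in X=S_\sing\}$ is proper because $X$ has dimension $2$. Thus a general $\ell\in\pi_1(I)$ meets $S$ in two smooth points, once with multiplicity $2$, which is exactly the Geometric Description I of $\OO_2$. Hence $\OO_2$ is dense in $\pi_1(I)$, giving $\ol\OO_2=\ol{\pi_1(I)}=\ST_1(S)$.

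To extract the ``in particular'' statements I would check that $\pi_1$ is birational onto its image. Over a line $\ell\in\OO_2$, with $\ell\cap S=2p+q$ for distinct smooth points $p,q$ ($q$ transverse), the fiber $\pi_1^{-1}(\ell)=\{(\ell,r):r\in S_\sm\cap\ell,\ m_r(\ell,S)\geq 2\}$ contains only $r=p$, since $m_r\geq 2$ fails at the multiplicity-$1$ point $q$. So the fiber is a single reduced point and $\pi_1$ is an isomorphism over the dense orbit $\OO_2$. This exhibits $\OO_2$ as birational to the $\PP^3$-bundle $I$ over $S_\sm$, and in particular $\dim\OO_2=\dim I=7$.

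The main obstacle is the genericity analysis of the second paragraph: one must be certain that (i) a general line of $T_pS$ through $p$ is only simply tangent, with no generic hyperflex, and (ii) the residual point $q$ avoids the singular locus $X$. Both reduce to showing the relevant degeneracy loci are proper closed subsets of the irreducible $I$, and the explicit $\OO_2$ representative already computed pins down the generic intersection type and guarantees properness. Everything else is bundle bookkeeping over the smooth fourfold $S_\sm$.
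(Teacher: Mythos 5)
Your proposal is correct and follows essentially the same route as the paper: the paper works with the incidence correspondence $\Phi_2=\{(\ell,p):m_p(\ell,S)\geq 2\}$, identifies its fibers over $S_\sm$ as the $\PP^3$ of lines through $p$ in $\TT_pS$, and deduces irreducibility, the identification $\ol\OO_2=\pi_1(\Phi_2)=\ST_1(S)$, and birationality from the uniqueness of the tangency point. Your only (harmless) variation is arguing density of $\OO_2$ by excluding the degeneracy loci on $I$ via the explicit representative, where the paper instead notes $\OO_2$ is open in $\ST_1(S)$ as the complement of lines meeting $X$ or tangent to order $\geq 3$.
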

\begin{proof}
Look at the incidence correspondence $\Phi_2$ defined in \S 2.1 above. The projection $\pi_2$ is dominant onto $S$. Given any $p\in S_\sm$, the fiber $\pi_2\inv(p)\subset\Phi_2$ is the set of lines tangent to $S$ at $p$, and these are exactly the lines through $p$ in $\TT_p S\cong \PP^4\subset\PP^5$.\footnote{In fact, suppose we have a smooth point $p=[LM]\in S_\sm$ corresponding to the product of two distinct lines $L,M\subset\PP^2$ and another point $q\in \PP^5\minus S$ with corresponding conic $C_q\subset \PP^2$. Then the line $\ell=\ol{p,q}$ is tangent to $S$ at $p$ iff $C_q\ni L\cap M$. In other words, the tangent hyperplane $\TT_p S$ is the hyperplane in $\PP^5$ of conics in the plane containing the point $L\cap M$.}It follows that $\Phi_2$ is the closure of projectivized geometric tangent bundle $\PP T S_{\sm}$ over $S_\sm$. Since $S$ is irreducible, it follows that $\Phi_2$ is irreducible of dimension 7, and birational to a $\PP^3$-bundle over $S_\sm$.

Now $\pi_1(\Phi_2)=:\ST_1(S)$ is the variety of tangent lines to $S$ (where we use the notation of \cite[\S 15.7]{Harris}; since $S$ is singular we need to be a little careful in how we define this variety). In here, $\OO_2$ is an open subset, since it is the complement of the locus of lines intersecting the Veronese surface or tangent to $S$ to order 3 or higher. It follows that $\ol \OO_2=\pi_1(\Phi_2)=\ST_1(S)$. Finally, the projection $\pi_1:\Phi_2\to \ol \OO_2$ is regular and birational, since a line simply tangent to the cubic hypersurface $S$ is tangent to it at a unique point.
\end{proof}

\begin{proposition}
\label{proposition: class_of_O2}
The class $[\ol\OO_2]=6\sigma_1$.
\end{proposition}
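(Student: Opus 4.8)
The plan is to use the description from Proposition~\ref{proposition: orbit_O2}, namely that $\ol\OO_2 = \ST_1(S)$ is the variety of tangent lines to $S$, a subvariety of codimension $1$ in $\GG(1,5)$. Since $A^1\GG(1,5)$ is freely generated by $\sigma_1$, we must have $[\ol\OO_2] = d\,\sigma_1$ for some nonnegative integer $d$, and the entire problem reduces to determining the single coefficient $d$.

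To pin down $d$, I would intersect against the Poincar\'e-dual class $\sigma_{4,3}$, which satisfies $\sigma_1\cdot\sigma_{4,3} = \sigma_{4,4} = [\mathrm{pt}]$ by Pieri, so that $d = \deg\bigl([\ol\OO_2]\cdot \sigma_{4,3}\bigr)$. Geometrically, I would represent $\Sigma_{4,3}(\SV)$ for a general flag as the pencil of all lines passing through a fixed point $p$ and contained in a fixed plane $\Pi\cong\PP^2$ with $p\in\Pi$. Thus $d$ counts the lines in this pencil that are tangent to $S$.

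The key observation is that for a general plane $\Pi\subset\PP^5$, the intersection $C := \Pi\cap S$ is a smooth plane cubic: since the Veronese surface $X = S_{\sing}$ has dimension $2$, a general $2$-plane $\Pi$ is disjoint from $X$, so $C$ avoids the singular locus of $S$ and is therefore a smooth curve of degree $3$ in $\Pi$. Because $\ell\subset\Pi$ forces $\ell\cap S = \ell\cap C$ as subschemes of $\ell$, a line $\ell$ in the pencil is tangent to $S$ if and only if it is tangent to $C$. Hence $d$ equals the number of tangent lines to $C$ drawn from the general point $p$, i.e.\ the \emph{class} of the smooth plane cubic $C$, which is $\deg C\,(\deg C - 1) = 3\cdot 2 = 6$.

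Finally, I would invoke Kleiman's transversality theorem (valid in characteristic $0$) to guarantee that a general translate of $\Sigma_{4,3}$ meets $\ol\OO_2$ transversally in reduced points, all lying in the open orbit $\OO_2$: from a general $p$ the simple tangent lines to $C$ are distinct and touch $C$ at a single smooth point, so each such $\ell$ meets $S$ doubly at one smooth point and is genuinely of type $\OO_2$. This ensures that every one of the $6$ tangent lines contributes with multiplicity exactly $1$, so $d = 6$ and $[\ol\OO_2] = 6\sigma_1$. The only delicate point is the identification of tangency to the fourfold $S$ with tangency to the plane cubic $C$, together with the verification that no spurious contributions arise from lines meeting $X$; both follow from the genericity of $\Pi$ and $p$. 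As a sanity check, pairing with $\sigma_1^{7}$ then recovers the Pl\"ucker degree $6\cdot 14 = 84$ recorded in Table~\ref{results}.
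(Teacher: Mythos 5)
Your argument is correct and is essentially identical to the paper's first proof: both pair $[\ol\OO_2]=d\,\sigma_1$ with $\sigma_{4,3}$, use genericity to reduce to a smooth plane cubic $C=\Pi\cap S$ avoiding $X$, and conclude $d=6$ as the class $\deg C(\deg C-1)$ of that cubic, with Kleiman's theorem supplying transversality. No substantive differences to report.
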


We give three proofs of this result, two here and one in \S 3.

\begin{proof}[Proof 1 of Proposition \ref{proposition: class_of_O2}.]
Write $[\ol\OO_2]=a\sigma_1$ for some $a\in \ZZ_{\geq 0}$. To figure $a$ out, we take the product on both sides with $\sigma_{4,3}\in A\GG(1,5)$. By Kleiman's transversality theorem, to calculate $a$, we're asking the question: given a general 2-plane $\Lambda\subset \PP^5$ and a general point $p\in \Lambda$, how many lines through $p$ and contained in $\Lambda$ are tangent to $S$? Since a general $2$-plane $\Lambda\subset \PP^5$ does not meet the two-dimensional $X$, Bertini's and Bezout's theorems tell us that for a general 2-plane $\Lambda$ the intersection $\Lambda\cap S$ will be a smooth plane cubic, and so given a general point $p\in \Lambda$, the number of lines through it tangent to this smooth cubic curve will be exactly the degree of the dual curve to a smooth cubic. It is well-known that the degree of the curve dual to a smooth plane curve of degree $d$ is $d(d-1)$; for $d=3$, this is $a=6$.
\end{proof}

\begin{proof}[Proof 2 of Proposition \ref{proposition: class_of_O2}]
To answer the same enumerative question differently, let $\Lambda$ be the span of the three points $p,p',p''$ corresponding to matrices $Q,Q',Q''$ respectively. The $\PP^1$ of lines in $\Lambda$ through $p$ then can be parametrized as $\langle Q, \lambda Q'+\mu Q''\rangle$ where $[\lambda,\mu]\in \PP^1$. Writing \[\det(sQ+t(\lambda Q'+\mu Q'')) = f_0(\lambda, \mu)s^3+f_1(\lambda,\mu) s^2 t+f_2(\lambda,\mu) st^2 +f_3(\lambda,\mu)t^3,\] where $f_{i}(\lambda,\mu)\in K[\lambda,\mu]$ is homogenous of degree $i$, we see that this cubic in $s$ and $t$ has a double root iff the discriminant \begin{align*} q_{6}(\lambda,\mu)&=\disc_{[s,t]}\det(sQ+t(\lambda Q'+\mu Q''))\\&=18f_0f_1f_2f_3 - 4f_1^3 f_3 +f_1^2f_2^2-4f_0f_2^2-27f_0^2f_3^2\end{align*} vanishes. This discriminant is a sextic polynomial and so generally has $a=6$ roots in $\PP^1$.
\end{proof}

\begin{corollary}
The Plücker degree $\deg \ol\OO_2=84$.
\end{corollary}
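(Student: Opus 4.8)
The plan is to read off the degree directly from the class computation just completed in Proposition \ref{proposition: class_of_O2}, using the general principle that the Plücker degree is a linear functional on $A\GG(1,5)$ whose value on a Schubert cycle is supplied by Lemma \ref{lemma: pluecker_degree_lines}. Concretely, for a $k$-dimensional closed subscheme $Y\subseteq\GG(1,5)$, its Plücker degree is $\deg Y=\int_{\GG(1,5)}[Y]\cdot\zeta^{k}$, where $\zeta$ is the hyperplane class of the Plücker embedding $\GG(1,5)\injto\PP^{14}$. Since $\zeta=\sigma_1$, this degree depends only on the Chow class $[Y]$ and is additive in it.

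First I would recall that $[\ol\OO_2]=6\sigma_1$, so that $\ol\OO_2$ has codimension $1$ and hence dimension $7$. Then $\deg\ol\OO_2=\int_{\GG(1,5)}6\sigma_1\cdot\sigma_1^{7}=6\int_{\GG(1,5)}\sigma_1^{8}=6\deg\GG(1,5)$, and since $\deg\GG(1,5)=14$ this immediately gives $\deg\ol\OO_2=84$. Equivalently, I would simply invoke Lemma \ref{lemma: pluecker_degree_lines} with $N=5$ and $(a,b)=(1,0)$ to get $\deg\Sigma_1=\binom{7}{4}\cdot\frac{2}{5}=14$, and then multiply by the coefficient $6$ from Proposition \ref{proposition: class_of_O2} to again obtain $6\cdot 14=84$.

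There is no genuine obstacle here; this is a one-line consequence of the class $[\ol\OO_2]=6\sigma_1$. The only points requiring care are the normalization $N=5$ in Lemma \ref{lemma: pluecker_degree_lines} (chosen so that $\deg\Sigma_{0,0}=\tfrac{1}{5}\binom{8}{4}=14$ correctly recovers the degree of the Grassmannian itself), and the identification of $\sigma_1$ with the Plücker hyperplane class, which is exactly what makes the Plücker degree linear in the Chow class.
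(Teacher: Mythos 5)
Your argument is correct and matches the paper's proof: the paper likewise applies Lemma \ref{lemma: pluecker_degree_lines} with $N=5$, $(a,b)=(1,0)$ to get $\deg\Sigma_1=14$ and multiplies by the coefficient $6$ from Proposition \ref{proposition: class_of_O2}. The additional remarks about linearity of the degree and the identification $\zeta=\sigma_1$ are accurate but not needed beyond what the paper already assumes.
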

\begin{proof}
Taking $N=5, a=1, b=0$ in Lemma \ref{lemma: pluecker_degree_lines}, we see $\deg \Sigma_{1}=14$, from which it follows that $\deg \ol\OO_2 = 6\deg \Sigma_1 = 84$.
\end{proof}

\subsection{Orbit $\OO_3$}

This is the orbit of pencils that meet $S$ in two points, one of them being on $X$.

\begin{proposition}
The closure $\ol \OO_3$ is the variety $\SC_1(X)$ of lines incident to the Veronese surface. It is birational to a $\PP^4$ bundle over $\PP^2$, and so irreducible of dimension 6.
\end{proposition}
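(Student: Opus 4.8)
The plan is to mirror the structure of Proposition~\ref{proposition: orbit_O2} but using the incidence correspondence cutting out \emph{meeting the Veronese surface} rather than \emph{tangency to $S$}. First I would set up the incidence correspondence
\[\Psi:=\{(\ell,p):p\in \ell,\ p\in X\}\subseteq \GG(1,5)\times \PP^5,\]
together with its two projections $\rho_1$ onto $\GG(1,5)$ and $\rho_2$ onto $\PP^5$. The image $\rho_1(\Psi)$ is precisely $\SC_1(X)$, the variety of lines incident to $X$, so the goal reduces to understanding $\Psi$ via $\rho_2$ and then showing $\rho_1$ is generically one-to-one onto its image.

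Next I would analyze the fibers of $\rho_2$. Since $X$ is the Veronese surface and $\rho_2$ is clearly dominant onto $X$ (which is irreducible of dimension $2$), I would fix a point $p\in X$ and observe that the fiber $\rho_2^{-1}(p)$ is the set of lines in $\PP^5$ through $p$, i.e. the lines in $\PP^5$ containing the fixed point $p$. The space of such lines is $\PP(\PP^5/p)\cong \PP^4$. This exhibits $\Psi$ as a $\PP^4$-bundle over $X\cong \PP^2$, hence $\Psi$ is irreducible of dimension $2+4=6$. Since $X$ is smooth and the bundle structure is genuine (every line through $p$ qualifies, with no degeneration), this step should be clean and is where the claimed birationality ``$\PP^4$ bundle over $\PP^2$'' comes from.

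The main obstacle will be the last step: showing that $\rho_1:\Psi\to \SC_1(X)$ is birational, i.e. that a general line incident to $X$ meets it in exactly one point. A line $\ell\not\subset S$ that meets $X$ corresponds to a pencil meeting $S$ in a point of $X$; by the Geometric Description~I, a general such line lies in $\OO_3$ and meets $S$ in one singular (hence $X$) point and one smooth point, so it touches $X$ only once. Concretely, I would argue that a line meeting $X$ in two or more points is contained in $S$ (since two points of $X$ span a secant line, which lies in $S=S(X)$), and such lines form the lower-dimensional locus $\ol\OO_6$; thus over the dense open $\OO_3\subset \SC_1(X)$ the map $\rho_1$ has singleton fibers. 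I would also need $\SC_1(X)$ to be irreducible, which follows since it is the image of the irreducible $\Psi$. Finally, to conclude $\ol\OO_3=\SC_1(X)$, I would note $\OO_3$ is open in $\SC_1(X)$ (it is the complement of the closed locus of lines meeting $X$ more than once together with lines tangent to $S$ to higher order), and being dense in an irreducible set, its closure is all of $\SC_1(X)$; the dimension count $\dim\OO_3=6$ then gives codimension $2$ in $\GG(1,5)$, consistent with Table~\ref{results}.
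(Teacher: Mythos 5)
Your proposal is correct and follows essentially the same route as the paper: the same incidence correspondence $\Psi$, the same identification of the fibers over $X\cong\PP^2$ as $\PP^4$'s (the paper phrases this as $\Psi=\PP\mathcal{Q}|_X$), and the same birationality argument via the observation that the secant/tangent lines to $X$ form a locus of dimension $4<6$. No gaps.
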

\begin{proof}
Consider the incidence correspondence \[\Psi:=\{(\ell,p):p\in \ell\cap X\}\subset \GG(1,5)\times X\]
and the projections $\pi_1$ and $\pi_2$ to $\GG(1,5)$ and $X$ respectively. This is simply the projectivized tautological quotient bundle restricted to $X$, i.e. $\Psi = \PP \mathcal{Q}|_X$, and so $\Psi$ is irreducible of dimension 6. On the other hand, $\pi_1(\Psi)=:\SC_1(X)$ and the projection $\pi_1$ is a regular birational map because a general line incident to $X$ is incident to it at a unique point\footnote{This follows from the existence of \textit{one} such line, which is easy to establish. An equivalent way of say thing is that the variety $\CS X\subset \GG(1,5)$ of secant lines to $X$, which has dimension 4, is a proper subvariety of $\SC_1(X)$.} It follows that $\SC_1(X)$ is also irreducible of dimension 6. Since $\OO_3$ is open in $\SC_1(X)$ (it is the complement in $\SC_1(X)$ of lines meeting $S$ at a single point to order 3 or more), it follows that $\ol\OO_3=\SC_1(X)$.
\end{proof}

\begin{proposition}
The class $[\ol \OO_3]=4\sigma_2$.
\end{proposition}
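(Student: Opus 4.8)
The plan is to compute the class $[\ol\OO_3] = a\sigma_2 + b\sigma_{1,1}$ by pairing both sides against the dual Schubert classes $\sigma_2$ and $\sigma_{1,1}$ (whose products with the codimension-2 classes land in top degree), and reading off $a$ and $b$ from the resulting intersection numbers. Since $\ol\OO_3 = \SC_1(X)$ is the variety of lines incident to the Veronese surface $X$, both intersection numbers are concrete enumerative counts about lines meeting $X$ subject to Schubert conditions, which I would evaluate using Kleiman transversality to reduce to counting problems with generically chosen flags.

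First I would set up the two pairings. Recall in $A\GG(1,5)$ the Poincaré-dual pairing in codimension $2$: $\sigma_2\cdot\sigma_2 = 1$, $\sigma_2\cdot\sigma_{1,1}=0$, and $\sigma_{1,1}\cdot\sigma_{1,1}=1$ (I would double-check these against the Pieri/Giambelli relations for $\GG(1,5)$, i.e. $\GG(2,6)$). Thus $[\ol\OO_3]\cdot\sigma_2 = a$ and $[\ol\OO_3]\cdot\sigma_{1,1} = b$. The class $\sigma_2$ (with respect to a flag with a distinguished point) is represented by lines passing through a fixed general point $q\in\PP^5$, while $\sigma_{1,1}$ is represented by lines contained in a fixed general $3$-plane $\PP^3\subset\PP^5$. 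The expected easier computation is against $\sigma_{1,1}$: I am asking how many lines lie in a general $\PP^3\subset\PP^5$ and meet $X$. A general $\PP^3$ meets the two-dimensional $X$ in finitely many points (by dimension count, generically $\PP^3\cap X$ is a set of $\deg X = 4$ points, since the Veronese surface has degree $4$), and a line in $\PP^3$ meeting $X$ must pass through one of these points; but the lines through a fixed point of $\PP^3$ that lie in $\PP^3$ form a $\PP^2$, a positive-dimensional family, so this naive count needs the correct excess/transversality bookkeeping. This is exactly where I expect the main subtlety: the raw incidence conditions are not transverse, and I must instead intersect $\SC_1(X)$ with a \emph{generic} translate of the Schubert variety and verify the intersection is dimensionally proper and reduced before counting.

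The cleaner route, which I would actually carry out, is to work on the incidence correspondence $\Psi = \PP\mathcal{Q}|_X$ introduced in the previous proposition, where $\pi_1\colon\Psi\to\ol\OO_3$ is birational. Pulling the Schubert conditions back to $\Psi$ turns each intersection number into a Chern-class integral on the $\PP^4$-bundle $\PP\mathcal{Q}|_X$ over $X\cong\PP^2$. Concretely, the condition $\sigma_2$ (lines through a general point $q$) pulls back to the locus of $(\ell,p)$ with $q\in\ell$, and the condition $\sigma_{1,1}$ (lines in a general $\PP^3$) pulls back to $\ell\subset\PP^3$; each becomes a Chern class of the tautological sub/quotient bundle on the projective bundle, and I would push forward to $X$ and integrate against $\int_{\PP^2}$ using the known Chern classes of the restriction to the Veronese. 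The main obstacle is therefore twofold: ensuring the birational map $\pi_1$ does not distort the count (i.e. that the relevant cycles avoid the locus, the secant variety $\CS X$, where $\pi_1$ fails to be an isomorphism) and correctly identifying the restricted tautological bundle $\mathcal{Q}|_X$ and its Chern classes in terms of $\OO_{\PP^2}(1)$ under the Veronese embedding $\PP V^*\hookrightarrow\PP^5$ by $\OO(2)$. Once these Chern data are in hand, both integrals are routine, and I expect to find $a = 4$ and $b = 0$, matching the asserted class $4\sigma_2$; the vanishing of the $\sigma_{1,1}$ coefficient will be the meaningful check, reflecting that a general $\PP^3$ in $\PP^5$ contains no line meeting $X$.
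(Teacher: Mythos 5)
There is a genuine gap: your intersection-theoretic setup in $A\GG(1,5)$ is dimensionally inconsistent. The Grassmannian $\GG(1,5)$ has dimension $8$, so the products $\sigma_2\cdot\sigma_2$, $\sigma_2\cdot\sigma_{1,1}$, and $\sigma_{1,1}\cdot\sigma_{1,1}$ live in codimension $4$, not in top degree; the relations $\sigma_2^2=1$, $\sigma_{1,1}^2=1$, $\sigma_2\sigma_{1,1}=0$ that you quote hold in $\GG(1,3)$ but not here. To extract $a$ and $b$ from $[\ol\OO_3]=a\sigma_2+b\sigma_{1,1}$ you must pair with the \emph{complementary} classes $\sigma_{4,2}$ and $\sigma_{3,3}$, which satisfy $\sigma_2\cdot\sigma_{4,2}=\sigma_{1,1}\cdot\sigma_{3,3}=1$ with the cross products vanishing. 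You also misidentify the representing cycles: in $\GG(1,5)$ the lines through a fixed point form $\Sigma_{4}$ (codimension $4$), not $\Sigma_2$, and the lines contained in a fixed $\PP^3$ form $\Sigma_{2,2}$ (codimension $4$), not $\Sigma_{1,1}$. This is why your count of lines in a general $\PP^3$ meeting $X$ comes out positive-dimensional: the issue is not a failure of transversality or an excess-intersection subtlety to be repaired by generic translates --- the expected dimension of that locus genuinely is $2$, so no choice of flag makes it finite. Your closing sanity check is likewise off: a general $\PP^3$ \emph{does} contain a two-parameter family of lines meeting $X$; it is a general $2$-plane that contains none, and that is what forces $b=0$.

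With the correct dual classes the argument becomes the paper's: pairing with $\sigma_{4,2}$ asks for lines through a general point $p$ of a general $3$-plane $\Pi$ that meet $X$ --- since $\Pi\cap X$ consists of $4$ points in linear general position and $p$ avoids the six lines they span, one gets $a=4$; pairing with $\sigma_{3,3}$ asks for lines in a general $2$-plane meeting $X$, which is empty for dimension reasons, giving $b=0$. Your alternative route through $\Psi=\PP\mathcal{Q}|_X$ and Chern-class integrals could in principle be made to work, but as written it inherits the same misidentification of the Schubert conditions and is not carried out, so it does not close the gap.
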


\begin{proof}
We write $[\ol\OO_3]=a\sigma_2+b\sigma_{1,1}$ for some $a,b\in \ZZ_{\geq 0}$. To compute $a$ and $b$, we take the product of $[\ol \OO_3]$ with $\sigma_{4,2}$ and $\sigma_{3,3}$ respectively. Taking the product with $\sigma_{4,2}$ amounts to asking: given a general 3-plane $\Pi\subset \PP^5$ and general point $p\in \PP$, how many lines containing a $p$ and contained in $\Pi$ intersect the Veronese surface? Since the Veronese surface has dimension 2 and degree 4, a general 3-plane $\Pi\subset\PP^5$ meets the Veronese surface in 4 distinct points in linear general position.\footnote{To show that these points are in linear general position, it suffices to observe that a general hyperplane section of the Veronese surface is a rational normal curve of degree 4.} Since a general point $p\in \Pi$ will not lie on the $\binom{4}{2}=6$ lines spanned by these points, it follows that there will be exactly $a=4$ lines through $p$ contained in $\Pi$ and meeting the Veronese surface.

Taking the product with $\sigma_{3,3}$ amounts to asking: given a general 2-plane $\Lambda\subset \PP^2$, how many lines intersecting the Veronese surface are contained in $\Lambda$? Well, a general 2-plane $\Lambda$ doesn't intersect the Veronese surface at all (for dimensional reasons), and so the answer is $b=0$.
\end{proof}

\begin{corollary}
The Plücker degree $\deg \ol\OO_3 = 36$.
\end{corollary}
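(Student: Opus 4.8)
The plan is to read the answer off directly from the class $[\ol\OO_3] = 4\sigma_2$ just obtained. As recalled in the introduction, the Plücker degree of any closed subvariety of $\GG(1,5)$ is the linear combination of its Chow-ring coefficients against the Plücker degrees of the Schubert cycles appearing in its expansion. Since the coefficient of $\sigma_{1,1}$ vanishes here, I expect the computation to collapse to
\[
\deg \ol\OO_3 = 4\,\deg \Sigma_2.
\]

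The only remaining input is $\deg\Sigma_2$, which I would extract from Lemma \ref{lemma: pluecker_degree_lines} exactly as in the preceding corollary for $\ol\OO_2$, now with $N=5$ and $(a,b)=(2,0)$:
\[
\deg \Sigma_2 = \binom{2\cdot 5-2-2}{5-1}\,\frac{2+1}{5} = \binom{6}{4}\,\frac{3}{5} = 15\cdot\frac{3}{5} = 9,
\]
whence $\deg\ol\OO_3 = 4\cdot 9 = 36$. As an independent cross-check I would recompute $\deg\Sigma_2 = \sigma_2\cdot\sigma_1^{6}$ directly inside $A\GG(1,5)$ using Pieri's rule, which should return the same value $9$ and is consistent with $\deg\GG(1,5) = \sigma_1^{8} = 14$.

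There is essentially no obstacle at this step: the genuine content lives entirely in the preceding proposition, where the Chow class $4\sigma_2$ was pinned down by the two Schubert intersection numbers. The only care required is the bookkeeping in Lemma \ref{lemma: pluecker_degree_lines}, namely substituting the indices $(a,b)$ and the ambient parameter $N=5$ (corresponding to $\GG(1,\PP^5)$) consistently. It is also a reassuring sanity check that the resulting value $36$ agrees with the figure recorded for this orbit in the sources cited in the introduction.
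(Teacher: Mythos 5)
Your proposal is correct and follows exactly the paper's own route: the paper likewise obtains $\deg\ol\OO_3 = 4\deg\Sigma_2 = 4\cdot 9 = 36$ by combining the class $[\ol\OO_3]=4\sigma_2$ with $\deg\Sigma_2=9$ from Lemma \ref{lemma: pluecker_degree_lines}. Your substitution $N=5$, $(a,b)=(2,0)$ into the lemma is carried out correctly, and the Pieri-rule cross-check is a reasonable (if unstated in the paper) extra verification.
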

\begin{proof}
Combine the previous result with $\deg \Sigma_2 = 9$ obtained from Lemma \ref{lemma: pluecker_degree_lines}.
\end{proof}

\subsection{Orbit $\OO_4$}

This is the orbit of pencils corresponding to the lines meeting $S$ in a single smooth point with multiplicity 3. 

\begin{proposition}
\label{proposition: orbit_O4}
The orbit $\OO_4$ is birational to a bundle over $S_\sm$ with each fiber consisting of two planes intersecting in a line and with nontrivial monodromy. In particular, $\ol\OO_4$ is irreducible of dimension 6.
\end{proposition}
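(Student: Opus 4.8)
The plan is to realize $\OO_4$ as a dense open subset of a fibration over $S_\sm$ given by the point of osculation, and to compute the fibers as the projectivized null cone of the second fundamental form of $S$. I would work with the restriction $\Phi:=\Phi_3\cap\bigl(\GG(1,5)\times S_\sm\bigr)$ of the incidence correspondence from \S2.1, with projections $\pi_1,\pi_2$. Every $\ell\in\OO_4$ meets $S$ in a single smooth point to order exactly $3$, so its point of osculation (equivalently, its unique singular member) is well defined; hence on the open locus $\Phi^\circ\subset\Phi$ where $\ell\not\subset S$ the projection $\pi_1$ is injective with image $\OO_4$. Since the lines through a fixed $p$ contained in $S$ form a proper closed subset of the fiber (by degree reasons), $\Phi^\circ$ is dense in $\Phi$, so $\pi_1$ is birational onto $\ol\OO_4$ and it suffices to analyze $\pi_2\colon\Phi\to S_\sm$.

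First I would compute the fiber $\pi_2\inv(p)$ over a smooth point $p\in S_\sm$. A line $\ell$ through $p$ meets $S$ to order $\geq 2$ iff $\ell\subset\TT_p S$, and to order $\geq 3$ iff moreover its direction lies on the null cone of the Hessian of $\Delta$ at $p$. Thus, inside the $\PP^3$ of directions at $p$ contained in the tangent hyperplane $\TT_p S\cong\PP^4$, the fiber is cut out by a single quadric. As $\PGL_3$ acts transitively on $S_\sm$, it suffices to compute this quadric at the standard rank-$2$ conic $p=[xy]$. A direct computation of the Hessian of $\Delta$ there gives, on tangent directions $(\delta a,\delta b,\delta c,\delta h,\delta e,\delta f)$, the quadratic form
\[-\,\delta c\,\delta h+\delta e\,\delta f;\]
imposing $\TT_p S=\{\delta c=0\}$ and quotienting by the direction of $p$ itself (here $\delta h$) leaves the rank-$2$ form $\delta e\,\delta f$, whose zero locus in $\PP^3$ is the union of the two planes $\{\delta e=0\}$ and $\{\delta f=0\}$ meeting along the line $\{\delta e=\delta f=0\}$. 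This already gives the dimension count: $\dim S_\sm=4$ and the fiber is a surface, so $\Phi$, and hence $\ol\OO_4$, has dimension $6$.

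The main obstacle is irreducibility, since each fiber is itself \emph{reducible}; this is where the monodromy enters. Computing the base locus of the standard representative $\langle x^2+yz,\ xz\rangle$ shows that the triple base point is the node $L\cap M$ of the singular conic $p=L\cup M$, while the simple base point lies on exactly \emph{one} of the two component lines $L,M$. I would check that this asymmetry is precisely what separates the two planes of the fiber: each plane is the closure of the family of osculating pencils whose simple base point lies on a prescribed one of the two lines. Consequently the two components of the fibers are indexed by the connected double cover of $S_\sm$ parametrizing a rank-$2$ conic together with a choice of one of its two lines---equivalently, ordered pairs of distinct lines, an open subset of $\PP V^*\times\PP V^*$. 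Being open in an irreducible variety, this cover is irreducible, so its deck involution (swapping $L$ and $M$) acts transitively on the two sheets; equivalently, the monodromy of the family of osculating directions exchanges the two planes. I would then invoke the standard fact that an equidimensional family over an irreducible base whose general fiber has its irreducible components permuted transitively by monodromy has irreducible total space. This shows $\Phi$ is irreducible of dimension $6$, hence so is $\ol\OO_4=\ol{\pi_1(\Phi^\circ)}$, and the nontrivial monodromy is exactly the statement that the two planes cannot be globally separated over $S_\sm$.
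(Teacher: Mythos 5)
Your proposal is correct and follows essentially the same route as the paper: both analyze the incidence correspondence $\Phi_3\to S_\sm$, identify each fiber as two planes meeting in a line, and conclude irreducibility from the nontrivial monodromy exchanging the two planes, with $\pi_1$ birational onto $\ol\OO_4$. The only differences are in execution: you compute the fiber via the Hessian of $\Delta$ at $[xy]$ where the paper identifies it synthetically as the (projection from $p$ of the) locus of conics tangent to $L$ or $M$ at $L\cap M$, and you certify the nontriviality of the monodromy by exhibiting the connected double cover as an open subset of $\PP V^*\times \PP V^*$ where the paper lifts a loop interchanging $L$ and $M$ to the Stein factorization.
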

\begin{proof}
Look at the incidence correspondence $\Phi_3$. On the one hand, again $\pi_2(\Phi_3) = S$. Given any $p=[LM]\in S_\sm$ corresponding to the product of the lines $L,M\subset \PP^2$ and another point $q\in \PP^5\minus S$ corresponding to the conic $C_q\subset \PP^2$, it is easy to see that the line $\ell=\ol{p,q}$ spanned by $p$ and $q$ intersects $S$ with multiplicity 3 at $p$ iff $C_q$ is tangent to either $L$ or $M$ at $L\cap M$. For a fixed $p$, the set of all such $q$ is the union of two 3-planes contained in $\TT_p S$ intersecting in the 2-plane of conics singular at $L\cap M$. In particular, given any $p\in S_\sm$, the fiber $\pi_2\inv(p)\subset \Phi_3$ is the projection of this locus from $p$, so the union of two 2-planes intersecting in a line.

 Further, this map $\pi_2:\Phi_3\to S$ has nontrivial monodromy (which is equivalent to saying that if $\Phi_3\to \hat S\to S$ is the Stein factorization of $\pi_2$, then the 2-sheeted cover $\hat S\to S$ is nontrivial). Indeed, this follows from the fact that after removing the $\PP^1$-bundle given by the intersection of the two planes that make up each fiber, the two irreducible components of the remaining fiber over a $p=[LM]$ correspond to tangency to $L$ or $M$; the lift of a loop on $S$ that corresponds to switching $L$ and $M$ then lifts to a path in the Stein factorization $\hat S$ that connects the two points on the fiber. In particular, it follows that $\Phi_3$ is irreducible of dimension 6.
 
  Since $\OO_4\subset\pi_1(\Phi_3)$ is an open subset (it is the complement of lines meeting the Veronese surface), it follows that $\ol\OO_4=\pi_1(\Phi_3)$. Finally, the map $\pi_{1}:\Phi_3\to \ol\OO_4$ is regular and birational because the fiber over any $\ell\in \OO_4$ is its unique point of intersection with $S$.
\end{proof}

\begin{proposition}
\label{proposition: class_of_O4}
The class $[\ol\OO_4]=6\sigma_2+9\sigma_{1,1}$.
\end{proposition}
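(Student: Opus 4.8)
The plan is to follow the same template as the computations of $[\ol\OO_2]$ and $[\ol\OO_3]$: since $\ol\OO_4$ has codimension $2$ in $\GG(1,5)=\GG(2,6)$, its class is a nonnegative integral combination $[\ol\OO_4]=a\sigma_2+b\sigma_{1,1}$, and I would pin down $a$ and $b$ by pairing against the Poincar\'e-dual Schubert classes. In the box $4\geq a\geq b\geq 0$ the complement of $(2,0)$ is $(4,2)$ and the complement of $(1,1)$ is $(3,3)$, so $\sigma_2\cdot\sigma_{4,2}=\sigma_{1,1}\cdot\sigma_{3,3}=\sigma_{4,4}$ while the cross terms vanish; hence $a=[\ol\OO_4]\cdot\sigma_{4,2}$ and $b=[\ol\OO_4]\cdot\sigma_{3,3}$. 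By Kleiman's theorem applied to the transitive $\PGL_3$-action, each product is a transverse enumerative count of lines of type $\OO_4$, i.e. lines meeting $S$ at a single smooth point with multiplicity $3$ (Geometric Description I), against a general Schubert configuration.

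To compute $b$, pairing with $\sigma_{3,3}$ asks how many lines $\ell$ lying in a general $2$-plane $\Lambda\subset\PP^5$ meet $S$ at a single smooth point to order $3$. Since $\dim\Lambda+\dim X<5$, a general $\Lambda$ is disjoint from $X$, so $E:=\Lambda\cap S$ is a smooth plane cubic consisting entirely of smooth points of $S$, and $\ell\cap S=\ell\cap E$. Thus I am counting lines in $\Lambda$ meeting $E$ at a single point with multiplicity $3$, namely the inflectional (flex) tangent lines of $E$. A smooth plane cubic has exactly $9$ flexes, each with its own flex line, so $b=9$.

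To compute $a$, pairing with $\sigma_{4,2}$ asks how many lines through a general point $p$ and contained in a general $3$-plane $\Pi\subset\PP^5$ meet $S$ at a single smooth point to order $3$. Set $Y:=\Pi\cap S$, a cubic surface, and choose coordinates on $\Pi\cong\PP^3$ with $p=[0:0:0:1]$, writing the defining cubic as $F=F_3+F_2x_3+F_1x_3^2+F_0x_3^3$ with $\deg F_j=j$. For $q\in Y$, restricting $F$ to the line $\ol{p,q}$ gives a binary cubic whose root at $q$ is triple precisely when
\[F(q)=0,\qquad \frac{\partial F}{\partial x_3}(q)=0,\qquad \frac{\partial^2 F}{\partial x_3^2}(q)=0,\]
the vanishing of $Y$, the first polar of $p$ (a quadric), and the second polar of $p$ (a plane). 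By Bezout these hypersurfaces of degrees $3,2,1$ meet in $3\cdot 2\cdot 1=6$ points, and since triple contact forces $\ell\cap Y=\{3q\}$ this yields $a=6$, whence $[\ol\OO_4]=6\sigma_2+9\sigma_{1,1}$.

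The hard part is justifying that these Bezout and flex counts are \emph{honest}: I must verify that for general $(\Pi,p)$ and general $\Lambda$ the equations cut out reduced, $0$-dimensional schemes of the expected length, and that every solution actually represents a line in the open orbit $\OO_4$. Concretely, none of the $6$ polar solutions may lie at $p$, at one of the four nodes $\Pi\cap X$ of $Y$, or on a line meeting $S$ to order higher than $3$ (which would land it in $\ol\OO_5$), and the $9$ flex lines must meet $S$ only at their flex point. I expect to handle all of this uniformly via Kleiman's theorem together with the fact, from Proposition \ref{proposition: orbit_O4}, that $\OO_4$ is open in $\pi_1(\Phi_3)=\ol\OO_4$, so that a general translate of the Schubert cycle avoids the lower-dimensional boundary strata and the locus of lines incident to $X$. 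As a consistency check, Lemma \ref{lemma: pluecker_degree_lines} gives $\deg\Sigma_2=9$ and $\deg\Sigma_{1,1}=5$, whence $\deg\ol\OO_4=6\cdot 9+9\cdot 5=99$, in agreement with Table \ref{results}.
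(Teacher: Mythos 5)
Your proposal is correct, and its overall skeleton — writing $[\ol\OO_4]=a\sigma_2+b\sigma_{1,1}$, pairing with the dual classes $\sigma_{4,2}$ and $\sigma_{3,3}$, and invoking Kleiman to turn each pairing into an honest enumerative count — is exactly the paper's first proof, as is the computation $b=9$ via the nine flexes of the smooth plane cubic $\Lambda\cap S$. Where you genuinely diverge is the coefficient $a$. The paper identifies $\Pi\cap S$ as the Cayley cubic (the unique cubic surface with four nodes, namely the points of $\Pi\cap X$), observes that for a \emph{smooth} cubic surface the answer $6$ is the classical count of cuspidal branch points of a general projection, and then quotes the literature on general projections of nodal cubic surfaces to conclude the count is unchanged. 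You instead compute directly: the triple-contact points are cut out by the surface together with the first and second polars of $p$, of degrees $3$, $2$, $1$, giving $6$ by Bezout. This is more self-contained and, pleasantly, makes the role of the four nodes transparent: every node automatically lies on $\VV(F)\cap\VV(\partial F/\partial x_3)$ (all first partials vanish there), but for general $p$ the second polar plane misses the nodes (by polar reciprocity this only requires $p$ to avoid the four first-polar quadrics of the nodes), so the nodes absorb none of the Bezout number and the answer is the same as in the smooth case. The residual issues you flag — reducedness of the length-$6$ scheme and exclusion of boundary strata — are handled by Kleiman exactly as you say, which is also how the paper treats them. Finally, note that the paper gives a second, quite different proof via the bundle of relative principal parts, computing $c_3(\SE^3)=(6\sigma_2+9\sigma_{1,1})\zeta$ on the flag bundle and pushing forward along the birational map $\Phi_3\to\ol\OO_4$; your polar argument sits nicely between the two, being enumerative like the first proof but as self-contained as the second.
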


Again, we give two proofs of this result, one here and one in $\S 3$.
\begin{proof}[Proof 1 of Proposition \ref{proposition: class_of_O4}]
Write $[\ol\OO_4]=a\sigma_2+b\sigma_{1,1}$ and take products with $\sigma_{4,2}$ and $\sigma_{3,3}$ as before.Taking the product with $\sigma_{4,2}$ amounts to asking: given a general 3-plane $\Pi\subset\PP^5$ and a general point $p\in \PP$, how many lines containing $p$ and contained in $\Pi$ are tangent to the cubic fourfold $S$ to order 3? If the intersection $S\cap\Pi$ were a smooth cubic surface, then the answer would be classical: the projection of smooth cubic surface in $\PP^3$ from a general point expresses it as a 3-sheeted cover of $\PP^2$ branched along a sextic with six cusps, and so there would be exactly $a=6$ such lines. The current problem is complicated by the fact $S\cap \Pi$ is not smooth; however, it is not too singular either (e.g. it is normal\footnote{It is normal by Serre's $R_1+S_2$ criterion because it is a hypersurface and regular in codimension 1.} and has finitely many isolated singularities), and the answer remains the same. Indeed, by Bertini's theorem, a general 3-plane $\Pi$ intersects $S$ in a cubic surface singular at 4 distinct points (namely the four points of $X\cap \Pi$), and there is only one such variety: the Cayley cubic, which is obtained by blowing up the plane along the 6 vertices of a complete quadrilateral (see \cite[pp. 640-646]{GH} , \cite{BW}, or \cite[\S 4.1.3]{Hunt}). General projections of cubic surfaces with ordinary double points have been studied as well (see \cite{FL}); since we'll give another proof of this result below, we'll content ourselves with quoting the results of \cite[\S 4.7]{FL} to conclude that $a=6$.

Finally, taking the product with $\sigma_{3,3}$ amounts to asking: given a general 2-plane $\Lambda\subset \PP^5$, how many lines contained in $\Lambda$ are triply tangent to $S$? By Bertini's and Bezout's theorems, a general $\Lambda$ intersects $S$ in a smooth cubic curve, and it is well-known that a smooth cubic has exactly $b=9$ flexes.
\end{proof}

\begin{corollary}
The Plücker degree $\deg \ol\OO_4 = 99$.
\end{corollary}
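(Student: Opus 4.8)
The plan is to reduce everything to the linearity of the Plücker degree over the Schubert basis, exactly as in the corollaries already proved for $\ol\OO_2$ and $\ol\OO_3$. Recall that for a cycle $Z\subset \GG(1,5)$ of dimension $k$, the Plücker degree is the intersection number $\deg Z = \int_{\GG(1,5)} [Z]\cdot \sigma_1^{k}$, which is a \emph{linear} functional of the class $[Z]\in A\GG(1,5)$. Since $\ol\OO_4$ has codimension $2$ (equivalently $k=6$), and since $\sigma_2$ and $\sigma_{1,1}$ both live in this same codimension, I would simply apply this functional to the class computed in Proposition \ref{proposition: class_of_O4}, obtaining
\[
\deg \ol\OO_4 = 6\deg \Sigma_2 + 9\deg\Sigma_{1,1}.
\]

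It then remains only to evaluate the two Schubert degrees via Lemma \ref{lemma: pluecker_degree_lines} with $N=5$. For $\Sigma_2$ we take $(a,b)=(2,0)$, recovering $\deg\Sigma_2 = \binom{6}{4}\cdot \frac{3}{5} = 9$, the value already recorded in the treatment of $\ol\OO_3$. For $\Sigma_{1,1}$ we take $(a,b)=(1,1)$, which gives $\deg\Sigma_{1,1} = \binom{6}{3}\cdot\frac{1}{4} = 5$. Substituting these yields $\deg\ol\OO_4 = 6\cdot 9 + 9\cdot 5 = 54 + 45 = 99$, as claimed.

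There is no genuine obstacle here: all the content is carried by Proposition \ref{proposition: class_of_O4}, and the present statement is a mechanical corollary. The one point deserving a moment's care is correctly matching the partition $(a,b)$ to the hook-length input of Lemma \ref{lemma: pluecker_degree_lines} in the case $\Sigma_{1,1}$, where $a=b$ forces the middle product range to behave slightly differently from the $b=0$ cases seen before. A convenient sanity check is that $\deg\Sigma_2 + \deg\Sigma_{1,1} = 9+5 = 14 = \deg \GG(1,5)$, which is precisely the Pieri relation $\sigma_1^2 = \sigma_2 + \sigma_{1,1}$ paired against $\sigma_1^6$.
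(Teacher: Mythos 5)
Your argument is correct and is essentially the paper's proof: both reduce the corollary to linearity of the Plücker degree over the Schubert basis and substitute $\deg\Sigma_2=9$, $\deg\Sigma_{1,1}=5$ (which you correctly extract from Lemma \ref{lemma: pluecker_degree_lines}) into the class from Proposition \ref{proposition: class_of_O4}. Your Pieri-relation sanity check is a nice extra touch but does not change the substance.
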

\begin{proof}
Combine the previous result with $\deg \Sigma_2 =9$ and $\deg \Sigma_{1,1}=5$.
\end{proof}

\subsection{Orbit $\OO_5$}

This the orbit of pencils that meet $S$ is a unique singular point (i.e. in a point of $X$) with multiplicity 3.

\begin{proposition}
The orbit $\OO_5$ is birational to a bundle over $\PP^2$ whose fiber is a rank 3 quadric threefold. In particular, $\ol\OO_5$ is irreducible of dimension 5.
\end{proposition}
\begin{proof}
Set up the incidence correspondance \[\Theta:=\{(\ell,p):p\in \ell \cap X\text{ and } m_p(S,\ell)\geq 3\}\subset \GG(1,5)\times X,\]
and let $\pi_1,\pi_2$ denotes projections to $\GG(1,5)$ and $X$ respectively. Given a point $p=[L^2]\in X$ corresponding to the double line $L^2\subset \PP^2$ and another point $q\in \PP^5\minus S$ corresponding to the conic $C_q\subset \PP^2$, it is easy to see that the line $\ell=\ol{p,q}$ joining $p$ and $q$ intersects $S$ with multiplicity 3 at $p$ iff $q$ lies in the tangent cone $\operatorname{TC}_p S$ to $S$ at $p$, which happens iff the conic $C_q$ is tangent to the line $L$ somewhere. The tangent cone $\operatorname{TC}_p S$ is a rank 3 quadric in $\PP^5$ containing $p$ in its vertex, and so for a fixed $p$, in the $\PP^4$ given by the projection of $\PP^5$ from $p$, this condition on $q$ defines a rank 3 quadric threefold $\pi_2\inv(p)\subset \Theta$. It follows that $\Theta$ is irreducible of dimension 5. Since $\OO_5$ is a nonempty open subset of $\pi_2(\Theta)$ (namely the complement in $\pi_2\Theta$ of the Fano variety $F_1(S)\subset \GG(1,5)$), it follows that $\ol\OO_5=\pi_2(\Theta)$. Finally, the map $\pi_2:\Theta\to \ol\OO_5$ is regular and birational (because the fiber over any point in $\OO_5$ is a single point).

\end{proof}

\begin{proposition}
The class $[\ol \OO_5]=4\sigma_{3}+8\sigma_{2,1}$.
\end{proposition}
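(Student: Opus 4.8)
The plan is to follow the same strategy as in the previous propositions. Since $\ol\OO_5$ has codimension $3$, I would write $[\ol\OO_5]=a\sigma_3+b\sigma_{2,1}$ for some $a,b\in\ZZ_{\geq0}$, the classes $\sigma_3$ and $\sigma_{2,1}$ forming a basis of the codimension-$3$ part of $A\GG(1,5)$. Under the duality pairing the classes $\sigma_3$ and $\sigma_{2,1}$ are dual to $\sigma_{4,1}$ and $\sigma_{3,2}$ respectively (so that $\sigma_3\cdot\sigma_{4,1}=\sigma_{2,1}\cdot\sigma_{3,2}=\sigma_{4,4}$ while $\sigma_3\cdot\sigma_{3,2}=\sigma_{2,1}\cdot\sigma_{4,1}=0$), so I would extract $a$ by intersecting with $\sigma_{4,1}$ and $b$ by intersecting with $\sigma_{3,2}$. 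In each case Kleiman's transversality theorem reduces the computation to an enumerative count against a general translate of the Schubert cycle; and since the boundary $\ol\OO_5\setminus\OO_5$ has dimension at most $4$, it cannot meet a general cycle of codimension $5$, so every point counted genuinely lies in the open orbit $\OO_5$.

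For $a$: the cycle $\Sigma_{4,1}$ is the locus of lines through a fixed point $p$ and contained in a fixed hyperplane $H\ni p$, so $a$ counts the $\OO_5$-lines through a general point $p\in\PP^5$ lying in a general hyperplane $H$. Such a line $\ell$ meets $S$ in a single point $r$, necessarily on $X$ and necessarily in $H$, and equals $\ell=\ol{p,r}$; conversely $\ol{p,r}$ lies in $\OO_5$ iff $p\in\operatorname{TC}_rS$. Writing $r=[L^2]$, the proof of the previous proposition identifies $\operatorname{TC}_rS$ with the conics tangent to $L$, so the condition $p\in\operatorname{TC}_rS$ says exactly that $C_p$ is tangent to $L$. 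Thus $a$ equals the number of $r=[L^2]\in X\cap H$ for which $L$ is tangent to $C_p$. I would now pass to the Veronese picture $X\cong\PP V^*$, under which the hyperplane class of $\PP^5$ restricts to $\OO_{\PP V^*}(2)$: the curve $X\cap H$ becomes a conic $\Gamma\subset\PP V^*$, while the locus of $L$ tangent to $C_p$ is precisely the dual conic $C_p^*\subset\PP V^*$, again a conic. For general $p$ and $H$ these two conics are in general position, meeting transversely in $2\cdot2=4$ points, giving $a=4$.

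For $b$: the cycle $\Sigma_{3,2}$ is the locus of lines contained in a fixed $3$-plane $\Pi$ and meeting a fixed line $\Lambda_1\subset\Pi$, so $b$ counts $\OO_5$-lines inside a general $3$-plane $\Pi$ meeting a general line $\Lambda_1\subset\Pi$. As in the proof for $\OO_4$, a general $\Pi$ meets $S$ in a Cayley cubic surface with exactly four nodes, namely the four points of $X\cap\Pi$. An $\OO_5$-line in $\Pi$ must pass through one of these four nodes $r$ and meet $S\cap\Pi$ there to order $3$, i.e. lie in the tangent cone $\operatorname{TC}_r(S\cap\Pi)$, which is a quadric cone in $\Pi$ with vertex $r$. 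The lines through $r$ on this cone form a conic's worth (a $\PP^1$), and of these the ones meeting $\Lambda_1$ are exactly those lying in the plane $\ol{r,\Lambda_1}$, cutting the conic in $2$ points. Summing $2$ lines over each of the $4$ nodes gives $b=8$, whence $[\ol\OO_5]=4\sigma_3+8\sigma_{2,1}$.

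The main obstacle, as in the $\OO_4$ computation, is controlling transversality and genericity: I must ensure that for a general translate the four points of $X\cap\Pi$ really are ordinary double points of $S\cap\Pi$ (so their tangent cones are genuine rank-$3$ quadric cones), and that all the conic-meets-conic and conic-meets-line intersections above are transverse with reduced points, so that the naive counts $4$ and $8$ equal the intersection numbers. This is exactly what Kleiman's theorem guarantees in characteristic $0$; the only care needed is to verify that the general data $p,H,\Pi,\Lambda_1$ avoid the relevant degenerate configurations (e.g. that $\Lambda_1$ avoids the nodes, and that $C_p^*$ and $\Gamma$ are not tangent), which holds for general members of each family.
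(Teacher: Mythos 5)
Your proposal is correct and follows essentially the same strategy as the paper: expand $[\ol\OO_5]=a\sigma_3+b\sigma_{2,1}$, pair with $\sigma_{4,1}$ and $\sigma_{3,2}$, get $a=4$ from the dual conic $C_p^*$ (which the paper equivalently views as a rational normal quartic in $\PP^5$ met by a general hyperplane through $p$), and get $b=8$ as two ruling lines of the tangent cone at each of the four points of $X\cap\Pi$. The only differences are cosmetic reformulations of the same intersection counts, plus your (welcome) explicit remarks on transversality and on the boundary not contributing.
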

\begin{proof}
Let $[\ol \OO_5]=a\sigma_3+b\sigma_{2,1}$. To figure $a$ out, we take the product with $\sigma_{4,1}$ and this amounts to asking: given a general point $q\in \PP^5$ and a general hyperplane $H\subset \PP^5$ containing $q$, how many lines through $q$ contained in $H$ meet $S$ in a unique point of $X$? We take $q\in \PP^5\minus S$. If $C_q\subset \PP^2$ is the corresponding smooth conic, then the line joining $q$ and a point $[L^2]\in X$ meets $S$ in $[L^2]$ only iff $L$ is tangent to $C_q$. In particular, the locus of all such $L$ is the dual conic $C_q^*\subset\PP^{2*}$, which under the Veronese map $\PP^{2*}\to X$ maps to a rational normal quartic in $\PP^5$. It follows that a general hyperplane $H$ containing $q$ interesects this curve in $a=4$ points. 
 
 To figure $b$ out, we take the product with $\sigma_{3,2}$, and this amounts to asking: given a general line $\ell\subset \PP^5$ and a general $3$-plane $\Pi\subset \PP^5$ containing $\ell$, how many lines incident to $\ell$ and contained in $\Pi$ meet $S$ in a unique point of $X$? By Bertini's theorem, a general 3-plane section $\Pi\cap X$ of $X$ consists of 4 points in linear general position, say $p_i=[L_i^2]$ for $i=1,2,3,4$, where the $L_i\subset \PP^2$ are four lines in linear general position. For each such point $p_i$, the tangent cone $\operatorname{TC}_{p_i}S$ intersects the 3-plane $\Pi$ in a rank 3 quadric surface, i.e. in a cone over a plane conic. No two of these quadric surfaces coincide, and so the intersection of any of two of them has positive codimension in either.\footnote{In fact, the intersection $\operatorname{TC}_{p_i}S\cap \operatorname{TC}_{p_j}S$ for $i\neq j$ is the locus of conics that are tangent to both $L_i$ and $L_j$. This locus has two components, namely the locus of conics through $L_i\cap L_j$ and another component. The first of these is a hyperplane along which $\operatorname{TC}_{p_i}S$ and $\operatorname{TC}_{p_j}S$ meet with multiplicity two, and whose intersection with $\Pi$ is a line, as is easily checked in local coordinates. The second of these defines a quadric in $\PP^5$ for degree reasons, and this intersects $\Pi$ in a plane conic.} Therefore, in the Grassmannian $\GG(1,\Pi)\cong \GG(1,3)$, the set of lines meeting the four quadric surfaces in 8 distinct points is also open, so $b=8$.
\end{proof}

\begin{corollary}
The Plücker degree $\deg \ol\OO_5 = 56$.
\end{corollary}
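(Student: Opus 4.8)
The plan is to follow the same template as the Plücker-degree corollaries already proved for $\ol\OO_2$, $\ol\OO_3$, and $\ol\OO_4$: the Plücker degree of any closed subvariety of $\GG(1,5)$ is the linear combination of its Chow-ring coefficients in the Schubert basis, weighted by the Plücker degrees of the corresponding Schubert cycles (as recorded in the introduction). Since we have just established $[\ol\OO_5]=4\sigma_3+8\sigma_{2,1}$, it remains only to read off $\deg\Sigma_3$ and $\deg\Sigma_{2,1}$ and form the appropriate combination.

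To this end, I would invoke Lemma \ref{lemma: pluecker_degree_lines} with $N=5$, so that the admissible partitions are exactly those with $4\geq a\geq b\geq 0$. For the partition $(3,0)$ this gives
\[\deg\Sigma_3=\binom{2\cdot 5-2-3}{5-1}\cdot\frac{3+1}{5}=\binom{5}{4}\cdot\frac{4}{5}=4,\]
and for the partition $(2,1)$ it gives
\[\deg\Sigma_{2,1}=\binom{2\cdot 5-2-2-1}{5-1-1}\cdot\frac{2-1+1}{5-1}=\binom{5}{3}\cdot\frac{2}{4}=5.\]
Combining these, we obtain $\deg\ol\OO_5=4\cdot\deg\Sigma_3+8\cdot\deg\Sigma_{2,1}=4\cdot 4+8\cdot 5=56$.

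The computation is entirely routine, so there is no genuine obstacle here; the only points requiring care are setting the ambient parameter correctly (namely $N=5$ for lines in $\PP^5$) and correctly matching each summand of $[\ol\OO_5]$ to the ordered pair $(a,b)$ fed into the hook-length formula of Lemma \ref{lemma: pluecker_degree_lines}. As a sanity check, one can confirm these individual Schubert degrees against the values $\deg\Sigma_1=14$, $\deg\Sigma_2=9$, and $\deg\Sigma_{1,1}=5$ already used in the preceding subsections, all of which issue from the same lemma.
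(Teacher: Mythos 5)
Your proposal is correct and is exactly the paper's argument: combine $[\ol\OO_5]=4\sigma_3+8\sigma_{2,1}$ with $\deg\Sigma_3=4$ and $\deg\Sigma_{2,1}=5$ (both from Lemma \ref{lemma: pluecker_degree_lines} with $N=5$) to get $4\cdot 4+8\cdot 5=56$. The explicit verification of the Schubert degrees is a nice touch but does not change the route.
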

\begin{proof}
Combine the previous result with $\deg \Sigma_3=4$ and $\deg \Sigma_{2,1}=5$.
\end{proof}

\subsection{Orbit $\OO_6$}
This is the orbit of pencils which correspond to lines intersecting $X$ in two distinct points. 

\begin{proposition}
The closure $\ol\OO_6$ is the variety $\CS X$ of lines secant to $X$. It is birational to the Hilbert scheme $\Hilb^2\PP^{2*}$ of pairs of lines in the plane. In particular, $\ol\OO_6$ is irreducible of dimension 4.
\end{proposition}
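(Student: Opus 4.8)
The plan is to realize $\ol\OO_6$ as the birational image of $\Hilb^2\PP V^*$ (where $\PP V^* = \PP^{2*}$) under an explicit \emph{span} morphism, which will deliver all three assertions at once. I would build this morphism by combining two standard constructions. Since the Veronese $\nu\colon \PP V^*\to X\subset \PP^5$ is a closed embedding, it induces a closed embedding $\Hilb^2\nu\colon \Hilb^2\PP V^*\cong \Hilb^2 X\injto \Hilb^2\PP^5$ of Hilbert schemes of length-$2$ subschemes (functorially, by composing a flat family of length-$2$ subschemes of $\PP V^*$ with $\nu$). Composing with the span morphism $\mathrm{span}\colon \Hilb^2\PP^5\to \GG(1,5)$ --- which is well-defined because every length-$2$ subscheme of $\PP^5$ spans a unique line --- I obtain
\[\phi\colon \Hilb^2\PP V^*\xrightarrow{\Hilb^2\nu}\Hilb^2\PP^5\xrightarrow{\mathrm{span}}\GG(1,5),\qquad Z\mapsto \ol{\nu(Z)}.\]
Concretely, $\phi$ sends a reduced pair of distinct lines $\{L,M\}$ to the secant line $\ol{[L^2],[M^2]}$, and a non-reduced subscheme (a tangent vector at $[L]$) to the tangent line of $X$ at $[L^2]$.

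Next I would identify the image and prove birationality. The one geometric input is the fact recorded in the footnote to Geometric Description II: because $X$ is cut out by quadrics and contains no lines, no line meets $X$ in a subscheme of length $\geq 3$. Hence for every $Z$ the line $\ell=\phi(Z)$ satisfies $\ell\cap X=\nu(Z)$ \emph{scheme-theoretically}, since $\nu(Z)\subseteq \ell\cap X$ has length $2$ while $\ell\cap X$ has length $\leq 2$. Reading off $Z=\nu^{-1}(\ell\cap X)$ shows $\phi$ is injective, hence (in characteristic $0$) birational onto its image. As $\Hilb^2\PP V^*$ is proper, that image is a closed irreducible subvariety of $\GG(1,5)$, namely the variety $\CS X$ of secant-and-limiting-tangent lines to $X$.

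Finally I would match this with $\ol\OO_6$ and read off the numerics. By Geometric Description II, $\OO_6$ is exactly the set of lines meeting $X$ in two distinct points, i.e. the $\phi$-image of the open locus of reduced subschemes; its complement in $\CS X$ is the $\phi$-image of the (closed) non-reduced divisor, consisting of tangent lines. Thus $\OO_6$ is open and dense in the irreducible $\CS X$, so $\ol\OO_6=\CS X$. Since the Hilbert scheme of two points on a smooth surface is smooth and irreducible of dimension $2\cdot 2=4$ (Fogarty), and $\phi$ is birational onto $\CS X$, it follows that $\ol\OO_6=\CS X$ is irreducible of dimension $4$ and birational to $\Hilb^2\PP V^*$. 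I expect the only genuinely delicate point to be the scheme-theoretic care in defining $\phi$ across the non-reduced locus --- that $\mathrm{span}$ really is a morphism on all of $\Hilb^2\PP^5$ and that $\phi$ is injective rather than merely dominant there --- both of which rest on the length-$\leq 2$ intersection property above rather than on any further computation.
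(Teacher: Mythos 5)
Your proof is correct, and it reaches the same three conclusions by essentially the same underlying geometry --- secant lines to $X$ are parametrized by pairs of points of $\PP^{2*}$ via the Veronese --- but it is organized differently and is rather more self-contained than the paper's argument. The paper simply observes that $\ol\OO_6=\CS X$ ``almost by definition,'' quotes the general dimension formula $\dim \CS X = 2\dim X$ for secant varieties of nondegenerate surfaces, and then notes that $\{x,y\}\mapsto \ol{x,y}$ gives an isomorphism of $\Sym^2 X\minus\Delta_X$ onto $\OO_6$, hence a birational map from $\Hilb^2\PP V^*$. You instead build the global span morphism $\phi\colon \Hilb^2\PP V^*\to\GG(1,5)$ and prove it is injective on \emph{all} of $\Hilb^2\PP V^*$, including the non-reduced locus, using the key fact that $\ell\cap X$ has length at most $2$ for every line $\ell$ (because $X$ is cut out by quadrics and contains no lines). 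This buys you closedness and irreducibility of the image from properness of the Hilbert scheme, the identification $\ol\OO_6=\CS X$ without appeal to the general theory of secant varieties, and the dimension from Fogarty rather than from the secant dimension count; it also yields the slightly stronger statement that $\phi$ is a bijective morphism, not merely birational. The one point worth flagging is that you should make sure the span map $\Hilb^2\PP^5\to\GG(1,5)$ is genuinely a morphism of schemes (it is --- a flat family of length-$2$ subschemes determines a rank-$2$ subbundle of the trivial bundle, hence a map to the Grassmannian --- but this deserves a sentence or a reference); granting that, the argument is complete.
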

\begin{proof}
It follows almost by definition that $\ol \OO_6=\CS X$, which is irreducible of dimension $\dim \CS X = 2\dim X =4$ (see \cite[Prop. 11.24]{Harris}). The map $\Sym^2 X\minus \Delta_X\to \CS X$ sending a pair of points $\{x,y\}$ with $x\neq y$ to the line spanned by them is an isomorphism onto $\OO_6$ and hence gives us a birational map between $\CS X$ and the Hilbert scheme $\Hilb^2 X$ of pairs on points on $X$, which is simply $\Hilb^2 \PP V^*$.
\end{proof}

Now we have:

\begin{proposition}
\label{proposition: class_of_O6}
The class $[\ol\OO_6]=3\sigma_{3,1}+6\sigma_{2,2}$.
\end{proposition}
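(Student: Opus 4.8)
The plan is to expand $[\ol\OO_6]$ in the codimension-$4$ Schubert basis of $A\GG(1,5)$ and read off the coefficients by intersecting against the appropriate dual classes, exactly as in the proofs for $\OO_2,\dots,\OO_5$. In codimension $4$ the Schubert classes on $\GG(1,5)$ are $\sigma_4,\sigma_{3,1},\sigma_{2,2}$, so I would write $[\ol\OO_6]=a\sigma_4+b\sigma_{3,1}+c\sigma_{2,2}$ with $a,b,c\in\ZZ_{\geq 0}$. Each of these three classes is self-dual under the intersection pairing (the complement of $(a,b)$ in the $2\times 4$ box is $(4-b,4-a)$, which fixes each of $(4,0),(3,1),(2,2)$), so that $a=\deg([\ol\OO_6]\cdot\sigma_4)$, $b=\deg([\ol\OO_6]\cdot\sigma_{3,1})$, and $c=\deg([\ol\OO_6]\cdot\sigma_{2,2})$. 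By Kleiman's theorem I may evaluate each of these as a transverse, reduced count against a general translate of the corresponding Schubert cycle, using throughout that $\ol\OO_6=\CS X$ is the variety of secant lines to the Veronese surface $X$.

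For $a$, the class $\sigma_4$ is represented by lines through a general point $q\in\PP^5$, so $a$ counts secant lines to $X$ through $q$. But the union of all secant lines to $X$ is precisely the secant variety $S=S(X)$, a cubic fourfold, and a general point of $\PP^5$ does not lie on the proper subvariety $S$; hence $a=0$. This is exactly why no $\sigma_4$ term appears in the answer. For $c$, the class $\sigma_{2,2}$ is represented by lines contained in a general $3$-plane $\Pi\cong\PP^3$, so $c$ counts secant lines to $X$ contained in $\Pi$; such a line has both of its points in $X\cap\Pi$. Since $X$ has dimension $2$ and degree $4$, a general $\Pi$ meets $X$ in $4$ points in linear general position, and the $\binom{4}{2}=6$ chords they span all lie in $\Pi$, whence $c=6$ (again with transversality from Kleiman).

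For $b$, the class $\sigma_{3,1}$ is represented by lines lying in a general hyperplane $H\cong\PP^4$ and meeting a general line $L\subset H$. A secant line to $X$ lying in $H$ has both points on $X\cap H$, which (as noted in \S 2.1) is a rational normal quartic curve $C\subset\PP^4$; so $b$ counts the chords of $C$ meeting $L$. A chord of $C$ meets $L$ iff it passes through a point of $L\cap S(C)$, where $S(C)$ is the secant (chordal) threefold of $C$, and conversely a general point of $S(C)$ lies on a unique chord; hence $b$ equals the number of points of $L\cap S(C)$, i.e.\ $\deg S(C)$. Since the secant variety of the rational normal quartic in $\PP^4$ is a threefold of degree $\binom{3}{2}=3$ (the general degree being $\binom{d-1}{2}$ for the rational normal curve of degree $d$ in $\PP^d$), we get $b=3$.

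I expect this last count to be the main obstacle: one must know (or reprove) the degree of the secant variety of the rational normal quartic and verify that the general line $L$ meets $S(C)$ in $3$ reduced points, each lying on a single chord of $C$, which is where the genericity of $H$ and $L$ together with Kleiman's theorem do the real work. Combining the three coefficients gives $[\ol\OO_6]=3\sigma_{3,1}+6\sigma_{2,2}$; as a consistency check, Lemma \ref{lemma: pluecker_degree_lines} yields $\deg\Sigma_{3,1}=3$ and $\deg\Sigma_{2,2}=2$, so the Plücker degree is $3\cdot 3+6\cdot 2=21$, matching Table \ref{results}.
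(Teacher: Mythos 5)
Your proposal is correct and follows essentially the same route as the paper: expand $[\ol\OO_6]=a\sigma_4+b\sigma_{3,1}+c\sigma_{2,2}$, get $a=0$ since a general point misses $S$, get $c=6$ from the $\binom{4}{2}$ chords of the four points of $X\cap\Pi$, and get $b=3$ by counting chords of the rational normal quartic $X\cap H$ through the three points of $\ell\cap S(X\cap H)$. The only cosmetic difference is that you quote $\deg S(C)=\binom{3}{2}=3$ from the general formula, whereas the paper observes that $H\cap S$ \emph{is} that secant threefold and so has degree $\deg S=3$; both also require the same genericity check (each of the three points lies on a unique chord), which the paper justifies by noting that four coplanar points on a rational normal curve are impossible.
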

\begin{proof}
Write $[\ol\OO_6]=a\sigma_4+b\sigma_{3,1}+c\sigma_{2,2}$. To figure $a$ out, we take the product with $\sigma_{4}$, which amounts to asking: given a general point $p\in \PP^5$, how many lines through $p$ are secant to $X$? Well, the Veronese surface is famously defective and the secant variety $S=S(X)$ is a proper subvariety of $\PP^5$, so the answer is simply $a=0$.

To figure $b$ out, we take the product with $\sigma_{3,1}$, and this amounts to asking: given a general line $\ell\subset\PP^5$ and a hyperplane $H$ containing $\ell$, how many secant lines to $X$ intersect $\ell$ and lie in $H$? By Bertini's and Bezout's theorems, for a general $H$ the intersection $H\cap X$ is a rational normal curve of degree 4 and the intersection $H\cap S$ is the secant variety of this rational normal curve. A general line $\ell\subset H$ meets $H\cap S$ in $\deg S = 3$ points. Generally, these points will not lie on the tangential surface to $H\cap X$. Each of these points lies on exactly one secant to the $H\cap X$, since otherwise we'd get 4 distinct coplanar points on it, which is impossible. In all, it follows that there are $b=3$ such secant lines intersecting $\ell$.

Finally, to figure $c$ out, we take the product with $\sigma_{2,2}$, and this amounts to asking: given a general 3-plane $\Pi\subset \PP^5$, how many secant lines to $X$ lie in $\Pi$? As observed several times above, a general 3-plane $\Pi$ intersects $X$ in 4 points in linear general position, so in general there are $c=\binom{4}{2}=6$ such secant lines.
\end{proof}

\begin{corollary}
The Plücker degree $\deg\ol\OO_6 = 21$.
\end{corollary}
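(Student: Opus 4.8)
The plan is to exploit the principle, noted in the introduction, that the Plücker degree of any closed subscheme of $\GG(1,5)$ is the linear combination of the Plücker degrees of the Schubert cycles prescribed by its class in $A\GG(1,5)$. Since we have just established that $[\ol\OO_6] = 3\sigma_{3,1} + 6\sigma_{2,2}$, the entire remaining task reduces to computing the two individual degrees $\deg\Sigma_{3,1}$ and $\deg\Sigma_{2,2}$ and forming the integer combination $3\deg\Sigma_{3,1} + 6\deg\Sigma_{2,2}$. In other words, no new geometry is required; everything is already encoded in the Chow class together with the hook length formula of Lemma \ref{lemma: pluecker_degree_lines}.

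Concretely, I would invoke Lemma \ref{lemma: pluecker_degree_lines} with $N = 5$, which is the correct parameter since $\GG(1,5)$ is the Grassmannian of lines in $\PP^5$ and every Schubert index here satisfies $4 \geq a \geq b \geq 0$. Substituting $(a,b) = (3,1)$ into the formula gives
\[\deg\Sigma_{3,1} = \binom{2\cdot 5 - 2 - 3 - 1}{5 - 1 - 1}\cdot\frac{3 - 1 + 1}{5 - 1} = \binom{4}{3}\cdot\frac{3}{4} = 3,\]
and substituting $(a,b) = (2,2)$ gives
\[\deg\Sigma_{2,2} = \binom{2\cdot 5 - 2 - 2 - 2}{5 - 1 - 2}\cdot\frac{2 - 2 + 1}{5 - 2} = \binom{4}{2}\cdot\frac{1}{3} = 2.\]
Combining these two outputs then yields $\deg\ol\OO_6 = 3\cdot 3 + 6\cdot 2 = 21$, exactly as asserted.

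The honest assessment is that there is essentially no genuine obstacle in this corollary: once the Chow class is in hand, the computation is a purely mechanical substitution into the combinatorial formula of Lemma \ref{lemma: pluecker_degree_lines}. The only points requiring minor care are clerical ones, namely confirming the convention $N = 5$ (which one can sanity-check against the already-verified value $\deg\Sigma_1 = 14$ from the treatment of $\ol\OO_2$) and making sure the coefficients $3$ and $6$ are matched to the correct Schubert cycles $\Sigma_{3,1}$ and $\Sigma_{2,2}$ rather than accidentally transposed. Given the earlier corollaries in this section, I would expect the write-up to be a single sentence of the form ``combine the previous proposition with $\deg\Sigma_{3,1} = 3$ and $\deg\Sigma_{2,2} = 2$.''
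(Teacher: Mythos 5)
Your proposal is correct and matches the paper's proof exactly: the paper simply combines the class $[\ol\OO_6]=3\sigma_{3,1}+6\sigma_{2,2}$ with $\deg\Sigma_{3,1}=3$ and $\deg\Sigma_{2,2}=2$ from Lemma \ref{lemma: pluecker_degree_lines}. Your explicit substitutions into the hook length formula are also accurate.
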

\begin{proof}
Combine the previous result with $\deg \Sigma_{3,1}=3$ and $\deg\Sigma_{2,2}=2$.
\end{proof}

\subsection{Orbit $\OO_7$}

This is the orbit of pencils corresponding to lines $\ell\subset S$ not meeting $X$ at all, and is an open dense subset of the component of the Fano variety $F_1(S)$ other than $\CS X$.

\begin{proposition}
The variety $\ol\OO_7$ is isomorphic to the Segre fourfold $\PP^2\times \PP^{2*}$ and is in particular irreducible of dimension 4.
\end{proposition}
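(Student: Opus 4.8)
The plan is to construct an explicit $\PGL_3$-equivariant morphism $\phi\colon \PP V\times\PP V^*\to\GG(1,5)$ whose image is $\ol\OO_7$, and then to upgrade it to an isomorphism. The geometric idea is that a pencil of type $\OO_7$ consists of conics all of the form $L\cdot M$, where $L$ is a fixed line and $M$ ranges over the pencil of lines through a fixed point $*\notin L$; the pencil is thus recorded by the pair $(*,L)\in\PP V\times\PP V^*$. Concretely, for $[v]\in\PP V$ write $\operatorname{Ann}(v)=\{\mu\in V^*:\mu(v)=0\}$ for the $2$-dimensional space of lines through $[v]$; as $[v]$ varies this is the fiber of the rank-$2$ subbundle $\mathcal{K}=\ker\big(V^*\otimes\OO_{\PP V}\to\OO_{\PP V}(1)\big)$. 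On $\PP V\times\PP V^*$ I would then form the multiplication map $p_2^*\OO_{\PP V^*}(-1)\otimes p_1^*\mathcal{K}\to\Sym^2 V^*\otimes\OO$ (where $p_1,p_2$ are the two projections); since $\Sym^\bullet V^*$ is a domain this map has constant rank $2$, so its image is a rank-$2$ subbundle $\mathcal{E}\subset\Sym^2V^*\otimes\OO$ with fiber $\lambda\cdot\operatorname{Ann}(v)$ over $([v],[\lambda])$. By the universal property of the Grassmannian, $\mathcal{E}$ defines the desired morphism $\phi$.

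Next I would identify the image and check bijectivity. Every member of the pencil $\phi([v],[\lambda])$ is $\lambda\mu$ with $\mu\in\operatorname{Ann}(v)$, hence a conic of rank $\le 2$, so the image lies in $S$; restricting to the locus $\{\lambda(v)\neq 0\}$ (i.e. $*\notin L$) gives exactly the pencils of $\OO_7$, while the boundary $\{\lambda(v)=0\}$ (i.e. $*\in L$) maps to $\OO_8$, matching the Hasse specialization $\OO_7\to\OO_8$. Since $\PP V\times\PP V^*$ is irreducible, projective, and $4$-dimensional, its image is a closed irreducible $4$-fold containing the dense $\OO_7\subset\ol\OO_7$, hence equals $\ol\OO_7$. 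For injectivity I would recover $(*,L)$ from a pencil: any two independent members $\lambda\mu_1,\lambda\mu_2$ have greatest common divisor $\lambda$ (as $\mu_1,\mu_2$ are distinct lines), so $L=\VV(\lambda)$ is determined, and dividing the pencil by $\lambda$ recovers the pencil of lines $\operatorname{Ann}(v)$ and hence its center $*=[v]$. Thus $\phi$ is a bijection onto $\ol\OO_7$, equivariant for the two $\PGL_3$-orbits via $\{*\notin L\}\to\OO_7$ and $\{*\in L\}\to\OO_8$.

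To conclude that $\phi$ is an isomorphism onto $\ol\OO_7$ I would invoke the criterion that, in characteristic $0$, a proper morphism that is injective on closed points and unramified (injective on Zariski tangent spaces) is a closed immersion. Properness is free since the source is projective, and injectivity on points is the previous step; the remaining task is to check that $d\phi$ is injective everywhere. By $\PGL_3$-equivariance it suffices to verify this at one point of each of the two orbits. On the open orbit this is automatic: $\phi$ restricts to a bijective equivariant morphism between the homogeneous spaces $\{*\notin L\}$ and $\OO_7$, which forces the point-stabilizers to coincide and hence $\phi|_{\{*\notin L\}}$ to be an isomorphism onto $\OO_7$.

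The \emph{main obstacle} is the differential along the boundary orbit $\{*\in L\}$, where the moving line $M$ can specialize to the fixed line $L$ and the product degenerates to a double line; here I would carry out a direct local-coordinate computation at the single representative $([0:0:1],[x])$, corresponding to the standard pencil $\langle x^2, xy\rangle$ of $\OO_8$, verifying that $d\phi$ remains injective so that $\phi$ is unramified there as well. Granting this, $\phi$ is a closed immersion and $\ol\OO_7\cong\PP V\times\PP V^*=\PP^2\times\PP^{2*}$. As an alternative to the tangent-space computation, one could instead argue that $\phi$ is finite and birational onto $\ol\OO_7$ and separately establish normality of $\ol\OO_7$, but the unramifiedness check appears to be the most self-contained route.
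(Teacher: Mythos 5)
Your construction is essentially the paper's: both send $(p,L)\in\PP^2\times\PP^{2*}$ to the pencil $L\cdot\operatorname{Ann}(p)$ of line-pairs with fixed component $L$ and moving component through $p$, and both identify the image with $\ol\OO_7$ via the decomposition $\{p\notin L\}\to\OO_7$, $\{p\in L\}\to\OO_8$. The difference is one of rigor, in your favor: the paper's proof only asserts that the map is an embedding away from the locus $p\in L$, which by itself does not yield the claimed isomorphism onto the closure, whereas you correctly isolate unramifiedness along $\{p\in L\}$ as the genuine remaining issue and reduce it, via properness, injectivity on closed points, and $\PGL_3$-equivariance, to a single tangent-space computation at a representative of the closed orbit. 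That computation does succeed: parametrizing near $([0:0:1],[x])$ by $v_t=e_3+t_1e_1+t_2e_2$ and $\lambda_s=x+s_1y+s_2z$, the pencil is spanned by $\lambda_s\cdot(x-t_1z)$ and $\lambda_s\cdot(y-t_2z)$, and modulo $\langle x^2,xy\rangle$ the differential sends $(\dot s_1,\dot s_2,\dot t_1,\dot t_2)$ to the homomorphism $x^2\mapsto(\dot s_2-\dot t_1)\,xz$, $xy\mapsto \dot s_1\,y^2+\dot s_2\,yz-\dot t_2\,xz$, which visibly has trivial kernel. So your outline is correct, and once this routine check is written out it gives a complete proof --- indeed one that closes a gap the paper's own two-line argument leaves open.
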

\begin{proof}
Consider the map $\PP^2\times\PP^{2*}\to \GG(1,5)$ sending a pair $(p,L)$ to the pencil of lines containing the fixed line $L$ and lines rotating around $p$. This is an injective map which is an embedding away from the locus $p\in L$. The image of this map is irreducible closed subset of $\GG(1,5)$ which contains $\OO_7$ as an open dense subset.
\end{proof}

\begin{proposition}
\label{proposition: class_of_O7}
The class $[\ol\OO_7]=6\sigma_{3,1}+3\sigma_{2,2}$.
\end{proposition}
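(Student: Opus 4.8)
The plan is to mimic the strategy used for the earlier orbit-closure computations: write $[\ol\OO_7] = a\sigma_4 + b\sigma_{3,1} + c\sigma_{2,2} \in A\GG(1,5)$ with $a,b,c \in \ZZ_{\geq 0}$ (since $\ol\OO_7$ has codimension $4$), and pin down the three coefficients by intersecting against the complementary-dimension Schubert classes $\sigma_4$, $\sigma_{3,1}$, and $\sigma_{2,2}$. By Kleiman transversality, each such product becomes an enumerative count of lines in $\ol\OO_7$ (i.e. lines $\ell\subset S$ not meeting $X$, together with their specializations) satisfying the incidence conditions imposed by a general flag. I would carry out these three counts in order of increasing difficulty.

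First I would compute $a$ by intersecting with $\sigma_4$: this asks how many lines through a general point $p\in\PP^5$ lie on $S$ and miss $X$. Since a general point $p$ does not even lie on the cubic fourfold $S$, no line through it can be contained in $S$, so $a=0$. Next, for $c$ I would intersect with $\sigma_{2,2}$, asking how many lines of $\ol\OO_7$ lie in a general $3$-plane $\Pi\subset\PP^5$. As was established repeatedly above, such a $\Pi$ meets $X$ in $4$ points in linear general position and meets $S$ in the Cayley cubic surface singular at those four points; the lines of $\ol\OO_7$ lying in $\Pi$ are precisely the lines on this cubic surface that avoid the four nodes. The classical fact that a Cayley cubic contains $9$ lines, six of which pass through pairs of the four nodes (these are the secants to $X\cap\Pi$, accounting for $\ol\OO_6$) and three of which avoid all nodes, then gives $c = 3$.

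The main obstacle is the coefficient $b$, obtained by intersecting with $\sigma_{3,1}$: given a general line $m\subset\PP^5$ and a general hyperplane $H\supset m$, I must count lines $\ell\subset S$ with $\ell\cap X=\varnothing$ that meet $m$ and lie in $H$. A general $H$ meets $X$ in a rational normal quartic $R = H\cap X$ and meets $S$ in the secant variety (equivalently the chordal cubic threefold) $H\cap S = S(R)$, and I would identify $H\cap\ol\OO_7$ inside the Fano scheme of this threefold, describing the lines it contains that avoid $R$. The line $m$ meets $H\cap S$ in $\deg S = 3$ points, and I must count how many lines of the relevant ruling pass through one of these intersection points. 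I expect this to require a careful analysis of the two families of lines on the chordal cubic (the secants to $R$, giving $\ol\OO_6$, versus the lines coming from $\ol\OO_7$) to avoid over- or under-counting, and to confirm $b = 6$. As a consistency check, I would verify that the resulting class $[\ol\OO_7] = 6\sigma_{3,1} + 3\sigma_{2,2}$ is exactly the ``dual'' of $[\ol\OO_6] = 3\sigma_{3,1}+6\sigma_{2,2}$ with the two coefficients interchanged, which is geometrically expected since $\ol\OO_6$ and $\ol\OO_7$ are the two components of the Fano variety of $S$, and I would cross-check the predicted Plücker degree $6\cdot 3 + 3\cdot 2 = 24$ against the value recorded in Table \ref{results}.
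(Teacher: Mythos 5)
Your overall strategy is the same as the paper's first proof of this proposition: expand $[\ol\OO_7]=a\sigma_4+b\sigma_{3,1}+c\sigma_{2,2}$ and evaluate against the complementary Schubert classes. Your arguments for $a=0$ (a general point misses $S$ entirely) and for $c=3$ (the three lines on the Cayley cubic $\Pi\cap S$ avoiding its four nodes) match the paper and are complete.

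However, there is a genuine gap in your treatment of $b$, which you yourself flag as the main obstacle but do not close. What is needed is a precise description of the Fano scheme of the chordal cubic threefold $H\cap S = S(R)$, where $R=H\cap X$ is a rational normal quartic: specifically, that through a general point of $S(R)$ (one not on the tangent developable of $R$) there pass \emph{exactly} three lines contained in $S(R)$ --- one secant to $R$ and exactly two lines of the second family, from which $R$ projects $2:1$ onto a conic. Saying you ``expect'' a careful analysis to confirm $b=6$ is not a proof; the count of two non-secant lines per point is precisely the nontrivial input, and it is what the paper isolates as Lemma~\ref{lemma: secant_threefold}, proved there by identifying $S(R)$ with the vanishing of the catalecticant $J$ of a binary quartic and exhibiting the three lines through $[L_1^4+\lambda L_2^4]$ explicitly. (Alternatively, one can derive the count of two from the fact that the secant component has multiplicity $4$ in the Fano scheme, which degenerates from the six lines through a point of a smooth cubic threefold, but that too requires justification.) Your ``duality'' consistency check is also not a real check: there is no a priori reason the coefficients of $[\ol\OO_6]$ and $[\ol\OO_7]$ should be interchanged, since the two components enter the Fano scheme class $[F_1(S)]=18\sigma_{3,1}+27\sigma_{2,2}$ with different multiplicities ($4$ and $1$); the symmetry of the answers is a coincidence, not a geometric expectation you can lean on. The Pl\"ucker-degree cross-check against Table~\ref{results} is fine as a sanity check but of course cannot substitute for establishing $b=6$.
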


Again, we give two proofs of this result, one here and one in \S 4.
\begin{proof}[First Proof of Proposition \ref{proposition: class_of_O7}.]
Write $[\ol\OO_7]=a\sigma_4+b\sigma_{3,1}+c\sigma_{2,2}$. To figure $a$ out, we take the product with $\sigma_4$, which amounts to asking: given a general point $p\in \PP^5$, how many lines through $p$ belong to $\ol\OO_7$? Since such lines are contained in $S$ and, as above, a general point $p$ doesn't lie in $S$ at all, the answer is $a=0$.

To figure $b$ out, we take the product with $\sigma_{3,1}$, which amounts to asking: given a general line $\ell\subset \PP^5$ and a hyperplane $H$ containing $\ell$, how many lines contained in the fourfold $S$ intersect $\ell$ and lie in $H$, other than the $3$ secants found above? Well, as before, the line $\ell$ meets $H\cap S$ in 3 points not lying on the tangent surface to $H\cap X$. Through each of these points, as it turns out and as we discuss below, there are exactly two lines contained in $S$ which are not secant to $X$, and so this gives us a total of $b=6$ such lines.

Finally, to calculate $c$, we take the product with $\sigma_{2,2}$, and this amounts to asking: given a general $3$-plane $\Pi\subset \PP^5$, how many lines in $\ol\OO_7$ lie in $\Pi$? This corresponds to lines in the Cayley cubic $\Pi\cap S$ \textit{other} than the 6 lines through its 4 ordinary double points, and this is well-known to be $c=3$.
\end{proof}

The remaining piece of the link is the following claim:

\begin{lemma}
\label{lemma: secant_threefold}
Let $\Gamma\subset \PP^4$ be the rational normal quartic. Given a general point\footnote{It suffices to take $p$ not lying on the tangent surface $\tan(\Gamma)$.} $p$ on the secant threefold $S_\Gamma:=S(\Gamma)$ to $\Gamma$, there are exactly three lines through $p$ contained in $S_\Gamma$, namely one line secant to $\Gamma$ and two other lines, from which $\Gamma$ is projected $2:1$ onto a smooth conic in the complementary plane.
\end{lemma}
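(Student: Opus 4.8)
The plan is to make the whole problem $\PGL_2$-equivariant. I would fix a two-dimensional vector space $W$ and identify $\PP^4 = \PP\Sym^4 W$ so that $\Gamma = \{[\ell^4] : \ell\in W\}$ is the locus of fourth powers of linear forms and $S_\Gamma$ is the $\PGL(W)$-invariant cubic of binary quartics whose middle catalecticant has rank $\le 2$. A general point of $S_\Gamma$ (in particular one off $\tan(\Gamma)$, as in the footnote) then has the form $p = [\ell_1^4+\ell_2^4]$ with $\ell_1,\ell_2$ distinct. Since any four points of $\Gamma$ are in linear general position, $\Gamma$ has no trisecants, so through a general point of $S_\Gamma$ there is a \emph{unique} secant, namely $\overline{[\ell_1^4][\ell_2^4]}$. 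This is the first of the three lines.

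For the other lines I would show that any line $M\subset S_\Gamma$ through a general $p$ and disjoint from $\Gamma$ realizes $\Gamma$ as a $2{:}1$ cover of a conic. Since $M\subset S_\Gamma$, every general point of $M$ lies on an honest secant of $\Gamma$ meeting $M$; and under the projection $\pi_M:\PP^4\dashrightarrow \PP^2$ from $M$, two points $\gamma(a),\gamma(b)$ collapse precisely when the chord $\overline{\gamma(a)\gamma(b)}$ meets $M$. Hence $\pi_M|_\Gamma$ is at least $2{:}1$; it cannot be $4{:}1$ onto a line because $\Gamma$ spans $\PP^4$, so it is exactly $2{:}1$ onto a conic, and its fibres define an involution $\sigma$ of $\Gamma\cong\PP^1$. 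Because the secant through $p$ meets $M$ at $p$, the pair $\{\ell_1,\ell_2\}$ must be a $\sigma$-orbit.

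The key step is to realize $\sigma$ by an order-two element $g\in\PGL(W)$ and decompose $\Sym^4 W$ into its $g$-eigenspaces: if $e,f$ are the fixed lines of $g$, the $(+1)$-eigenspace is $\langle e^4,e^2f^2,f^4\rangle$ (a $\PP^2$) and the $(-1)$-eigenspace is $\langle e^3 f, e f^3\rangle$ (a line $M_g$). For each $a$ the chord $\overline{\gamma(a)\gamma(\sigma a)}$ contains the invariant combination $\gamma(a)+\gamma(\sigma a)\in\PP^2$ and the anti-invariant combination $\gamma(a)-\gamma(\sigma a)\in M_g$; thus $M_g$ is exactly the center of projection, the complementary $\PP^2$ is where the conic lives, and $\pi_{M_g}|_\Gamma$ is the quotient by $\sigma$. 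Conversely each $M_g$ lies in $S_\Gamma$. It then remains to count the involutions $g$ with $g(\ell_1)=\ell_2$ for which $p\in M_g$, i.e. for which $p$ is $g$-anti-invariant: parametrizing this one-parameter family in coordinates where $\ell_1,\ell_2$ are the coordinate points reduces $p\in M_g$ to the single equation $c^2=-1$, with exactly two solutions. These give the two lines $M_g$, and since the image of $\gamma$ spans the $(+1)$-eigenspace the resulting conic is nondegenerate, hence smooth.

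The main obstacle is proving the list is \emph{complete} for general $p$: that no additional line of $S_\Gamma$ passes through $p$. This rests on the genericity reductions (taking $p\notin\tan(\Gamma)$ excludes tangent lines and keeps the secant reduced, and one must rule out a line of $S_\Gamma$ meeting $\Gamma$ in a single point through general $p$). As an independent check I would compare with the Bezout count of lines through a smooth point of a cubic threefold: intersecting the second- and third-order tangency conditions in the tangent $\PP^2$ produces a conic meeting a cubic in six points, and the content of the lemma is that these organize as the secant line with multiplicity four together with the two reduced lines $M_g$ — in agreement with the multiplicities later recorded for the Fano scheme of $S$.
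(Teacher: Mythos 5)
Your construction is correct and takes a genuinely different, more structural route than the paper: where the paper works directly with the catalecticant cubic $J$ and reads off the three lines through $[L_1^4+\lambda L_2^4]$ from its explicit equation, you realize the two non-secant lines as the $(-1)$-eigenlines $M_g=\langle e^3f, ef^3\rangle$ of the involutions $g\in\PGL(W)$ interchanging $\ell_1$ and $\ell_2$, and your count ($c^2=-1$, two solutions) reproduces exactly the paper's answer $[L_1L_2(L_1^2\pm\sqrt{-\lambda}L_2^2)]$. This has real advantages: it explains \emph{why} the projection from such a line is $2{:}1$ onto a conic (it is the quotient of $\Gamma\cong\PP^1$ by $\sigma$, and the target $\PP^2$ is the $(+1)$-eigenspace), which the paper only asserts "for degree reasons."

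However, there is a genuine gap, and you have named it yourself: the lemma asserts there are \emph{exactly} three lines, and your argument constructs three but does not exclude others. Two things are missing. First, for a line $M\subset S_\Gamma$ through $p$ disjoint from $\Gamma$, you pass from "the chords $\overline{\gamma(a)\gamma(\sigma a)}$ all meet $M$ and all meet $M_g$" to "$M=M_g$" with the word "thus"; this needs an argument, e.g.\ that these chords are pairwise skew (four points of $\Gamma$ are never coplanar), hence sweep out a cubic scroll of type $S(1,2)$ whose only line other than the rulings is the directrix $M_g$, and $M$, meeting infinitely many rulings, lies on the scroll and is not a ruling. Second, you must rule out a line $L\subset S_\Gamma$ through $p$ meeting $\Gamma$ in a single point $q$: one can project from $q$ to a twisted cubic $\Gamma'\subset\PP^3$ and use that a point off $\Gamma'$ lies on at most one secant of $\Gamma'$ to force either $L$ to be a generator of the secant cone at $q$ (hence a secant or tangent, excluded since $p\notin\tan(\Gamma)$ and the secant through $p$ is unique) or the secants through points of $L$ to be constant, a contradiction. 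The Bezout comparison ($2\cdot 3=6=4+1+1$) you offer is only a consistency check, since it presupposes the multiplicity $4$ of the secant component, which is itself a nontrivial fact proved elsewhere. By contrast, the paper's computation with $J$ — finding the common zeros of the polars of $J$ at $p$ — settles completeness in one stroke, which is precisely what your approach still owes.
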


To put this into context, we recall the well-known fact that given any point $p$ on a smooth cubic threefold $Y$, there are exactly 6 lines on $Y$ passing  through $p$. As this smooth cubic threefold specializes to the secant threefold $S_\Gamma$ (see the family explained in \cite[\S 2]{Collino}), one can study the specalization of this configuration of six lines. Segre showed in \cite{Segre} that the Fano variety $F_1(S_\Gamma)$ has two components, namely the variety $\CS(\Gamma)$ of secant lines to $\Gamma$, and another component of lines from which $\Gamma$ is projected $2:1$ onto a smooth conic in the complementary plane; these two components intersect in the variety $\ST(\Gamma)$ of tangent lines to $\Gamma$. It turns out that the first component has multiplicity 4 in the Fano scheme $F_1(S_\Gamma)$ (a proof is outlined in \cite[\S 2]{Collino}; we'll calculate this multiplicity below in \S 4 as well), and so counted properly this tells us through a given general point there should pass two lines of the second component. In stead of quoting this bit of theory, we give an alternative self-contained proof here.

\begin{proof}[Proof of Lemma \ref{lemma: secant_threefold}]
Think of $\PP^4=\PP H^0\OO_{\PP^1}(4)$ as the space of binary quartic forms.  The $j$-invariant of (the cross-ratio of) the four roots of the polynomial (see footnote 1) gives a rational function $j:\PP^4\dashrightarrow \PP^1$, whose fiber over a general point is a sextic hypersurface. There are two exceptional fibers corresponding to $j=0$ and $j=1728$, which are a triple quadric and a double cubic respectively. In fact, from classical invariant theory, we know that if we give $\PP^4$ the coordinates $a_0, \dots, a_4$ so that $[a_0,\dots, a_4]$ corresponds to the quartic polynomial $a_0x^4+a_1x^3y+\cdots+a_4y^4$, then these quadric and cubic surfaces are defined by the vanishing of the two invariants \begin{align*}I &= 12a_0a_4 - 3a_1a_3+a_2^2\text{ and }\\J&=72a_0a_2a_4-27a_0a_3^2-27a_1^2a_4+9a_1a_2 a_3-2a_2^3=\frac{1}{4}\det\begin{bmatrix}12a_0&3a_1&2a_2\\3a_1&2a_2&3a_3\\2a_2&3a_3&12a_4\end{bmatrix}\end{align*}
respectively. The latter, which will be more relevant to us, is (up to scaling the coordinates) the classical catalecticant or Hankel determinant of the binary quartic.

The space of all fourth powers of linear forms is a rational normal quartic $\Gamma\subset \PP^4$. A quartic has $j$-invariant 1728 iff by a $\PGL_2$ action its roots can be taken to $\{\pm 1, \pm i\}$ and this happens iff it is a linear combination of two fourth powers of linear forms, from which it follows that the cubic three-fold $\VV(J)$ is exactly the secant threefold $S_\Gamma$ to $\Gamma$. Now let $L_1, L_2$ be distinct linear forms on $\PP^1$; then for any $\lambda\in K^\times$, it is easy to see\footnote{By a $\PGL_2$ transformation, we can assume that $L_1=x$ and $L_2=y$, say.} from the explicit equation $J$ defining $S_\Gamma$ that there are exactly three lines in $S_\Gamma$ through the point $[L_1^4+\lambda L_2^4]\in S_\Gamma$, namely the secant line itself and the two lines joining $[L_1^4+\lambda L_2^4]$ to $[L_1L_2(L_1^2\pm \sqrt{-\lambda}L_2^2)]$. A general point on each of these two latter lines lies on a unique secant to $\Gamma$, so projection from such a line expresses $\Gamma$ as a two sheeted cover of a plane conic for degree reasons.
\end{proof}

\begin{corollary}
The Plücker degree $\deg\ol\OO_7 = 24$.
\end{corollary}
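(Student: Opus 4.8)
The plan is to exploit the fact that the Plücker degree is a linear functional on the Chow ring $A\GG(1,5)$, as already noted in the introduction: the Plücker degree of any closed subscheme is the linear combination of its Schubert-basis coefficients obtained by inserting the Plücker degrees of the corresponding Schubert cycles. Since Proposition \ref{proposition: class_of_O7} has just established that $[\ol\OO_7]=6\sigma_{3,1}+3\sigma_{2,2}$, the entire problem reduces to knowing the Plücker degrees of $\Sigma_{3,1}$ and $\Sigma_{2,2}$ and then taking the appropriate combination.

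First I would compute the degrees of the two relevant Schubert cycles from Lemma \ref{lemma: pluecker_degree_lines} with $N=5$. For $(a,b)=(3,1)$ this gives $\deg\Sigma_{3,1}=\binom{4}{3}\cdot\tfrac{3}{4}=3$, and for $(a,b)=(2,2)$ it gives $\deg\Sigma_{2,2}=\binom{4}{2}\cdot\tfrac{1}{3}=2$. These are precisely the two values already recorded in the corollary computing $\deg\ol\OO_6$, which is a convenient consistency check since $[\ol\OO_6]$ and $[\ol\OO_7]$ are supported on the same two Schubert classes $\sigma_{3,1}$ and $\sigma_{2,2}$ (they differ only in the coefficients). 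Then I would simply assemble the answer as $\deg\ol\OO_7=6\deg\Sigma_{3,1}+3\deg\Sigma_{2,2}=6\cdot 3+3\cdot 2=24$.

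There is essentially no obstacle to overcome here: all of the real content lives in Proposition \ref{proposition: class_of_O7} (the class computation) and Lemma \ref{lemma: pluecker_degree_lines} (the hook-length Plücker degrees), both of which are available. The only point requiring any care is ensuring one pairs each Schubert coefficient with the degree of the correct cycle; given that these match the degrees already used for $\ol\OO_6$, this is immediate, and the proof is a one-line combination of prior results in exactly the style of the preceding corollaries.

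\begin{proof}
Combine the class $[\ol\OO_7]=6\sigma_{3,1}+3\sigma_{2,2}$ from Proposition \ref{proposition: class_of_O7} with the Plücker degrees $\deg\Sigma_{3,1}=3$ and $\deg\Sigma_{2,2}=2$ obtained from Lemma \ref{lemma: pluecker_degree_lines}, giving $\deg\ol\OO_7=6\cdot 3+3\cdot 2=24$.
\end{proof}
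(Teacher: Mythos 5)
Your proposal is correct and is exactly the paper's argument: combine $[\ol\OO_7]=6\sigma_{3,1}+3\sigma_{2,2}$ from Proposition \ref{proposition: class_of_O7} with $\deg\Sigma_{3,1}=3$ and $\deg\Sigma_{2,2}=2$ from Lemma \ref{lemma: pluecker_degree_lines} to get $6\cdot 3+3\cdot 2=24$. The explicit verification of the two Schubert degrees via the hook-length formula is a nice touch but adds nothing beyond what the paper already uses.
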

\begin{proof}
Combine the previous result with $\deg \Sigma_{3,1}=3$ and $\deg\Sigma_{2,2}=2$.
\end{proof}

\subsection{Orbit $\OO_8$}

This is the closed orbit of pencils corresponding to lines $\ell\subset S$ tangent to $X$.

\begin{proposition}
The orbit $\OO_8=\ST X$ is the variety of lines tangent to $X$. In particular, $\OO_8$ is a $\PP^1$ bundle over $X\cong \PP^2$ and so irreducible of dimension 3.
\end{proposition}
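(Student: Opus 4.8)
The plan is to realize $\ST X$ as the projectivization of the tangent bundle of $X$ and to read both assertions off from there. Concretely, I would set up the incidence correspondence
\[
\Upsilon:=\{(\ell,p): p\in \ell\cap X \text{ and } m_p(\ell,X)\geq 2\}\subset \GG(1,5)\times X,
\]
with projections $\pi_1,\pi_2$ to $\GG(1,5)$ and $X$, and identify $\pi_1(\Upsilon)$ with $\ST X$. Since Geometric Description II already tells us that $\OO_8$ is the orbit of lines tangent to $X$, the inclusion $\OO_8\subseteq \ST X$ is immediate, and the real content is to compute the fibers of $\pi_2$ and then to upgrade this inclusion to an equality.

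For the fiber computation I would fix a point $p=[L^2]\in X$ and note that a line is tangent to $X$ at $p$ precisely when it passes through $p$ and lies in the embedded tangent plane $\TT_p X\cong \PP^2$; concretely, for $p=[x^2]$ one has $\TT_p X=\langle x^2,xy,xz\rangle$, the plane of conics of the form $x\cdot(\text{linear})$. The lines through $p$ in this $\PP^2$ form a $\PP^1$, so $\pi_2^{-1}(p)\cong \PP^1$ and $\Upsilon=\PP T_X$, a $\PP^1$-bundle over $X\cong\PP^2$, hence irreducible of dimension $3$. A small but important check is that every such line is a genuine tangent line meeting $X$ only at $p$: since $\TT_p X\cap X=\{p\}$ set-theoretically (for $p=[x^2]$ the only rank-$1$ conic of the form $x\cdot(\text{linear})$ is $x^2$ itself) and a line meets the Veronese surface in a scheme of length at most $2$ and contains no line of $X$ (by the footnote to Geometric Description II), no line in the fiber is secant to $X$ or meets it elsewhere. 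This also shows $\pi_1\colon\Upsilon\to \ST X$ is injective, as a tangent line determines its unique point of tangency.

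To promote $\OO_8\subseteq \ST X$ to an equality, I would argue that $\ST X$ is a single $\PGL_3$-orbit. The group $\PGL_3$ acts transitively on $X\cong \PP V^*$, and the stabilizer of a point $[L]\in \PP V^*$ acts on the tangent space $T_{[L]}\PP V^*\cong \operatorname{Hom}(\langle L\rangle,\, V^*/\langle L\rangle)$ through the full $\operatorname{GL}(V^*/\langle L\rangle)$, hence transitively on the $\PP^1$ of tangent directions. Therefore $\PGL_3$ is transitive on $\Upsilon=\PP T_X$, so $\ST X=\pi_1(\Upsilon)$ is a single orbit; as it contains $\OO_8$, it equals $\OO_8$. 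In particular $\OO_8$ is homogeneous, hence smooth.

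The main obstacle is making the bundle statement literally correct, i.e. showing that the parametrizing map $\pi_1\colon \PP T_X\to \ST X$ is an \emph{isomorphism} and not merely a bijection, so that $\OO_8\cong \PP T_X$ is honestly a $\PP^1$-bundle over $X$. Here I would use that $\OO_8=\ST X$ is smooth (being homogeneous) and that $\pi_1$ is a bijective morphism from the smooth projective threefold $\PP T_X$; in characteristic $0$ such a morphism is birational onto its image, and a birational bijective morphism onto a normal variety is an isomorphism by Zariski's Main Theorem. (Alternatively, one checks directly that $d\pi_1$ is injective at every point.) This identifies $\OO_8$ with the $\PP^1$-bundle $\PP T_X\to X\cong\PP^2$, completing the proof.
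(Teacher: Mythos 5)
Your proposal is correct. The paper's own proof of this proposition is literally the single word ``Clear,'' so there is no competing argument to compare against; what you have written is exactly the standard justification the author is implicitly invoking: the fiber of tangent lines at $p=[x^2]$ is the $\PP^1$ of lines through $p$ in $\TT_pX=\langle x^2,xy,xz\rangle$, the transitivity of $\PGL_3$ on $\PP T_X$ shows $\ST X$ is a single orbit containing (hence equal to) $\OO_8$, and the bijectivity-plus-smoothness argument upgrades the parametrization $\PP T_X\to \ST X$ to an isomorphism. All the individual checks (e.g.\ that $\TT_pX\cap X=\{p\}$, so no tangent line is secant) are accurate, and your write-up would serve as a complete replacement for the omitted proof.
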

\begin{proof}
Clear.
\end{proof}

\begin{proposition}

The class $[\OO_8]=6\sigma_{4,1}+6\sigma_{3,2}$.
\end{proposition}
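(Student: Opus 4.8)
The plan is to exploit that $\OO_8$ is already closed (it is the minimal orbit, to which every other orbit specializes) of dimension $3$, hence of codimension $5$ in $\GG(1,5)$. The only Schubert classes in codimension $5$ are $\sigma_{4,1}$ and $\sigma_{3,2}$, so I would write $[\OO_8]=a\sigma_{4,1}+b\sigma_{3,2}$ with $a,b\in\ZZ_{\geq 0}$. Since $\sigma_{4,1}$ and $\sigma_{3,2}$ are Poincaré dual to $\sigma_3$ and $\sigma_{2,1}$ respectively, I would recover $a=\int[\OO_8]\cdot\sigma_3$ and $b=\int[\OO_8]\cdot\sigma_{2,1}$, each computed as the enumerative count of a general translate guaranteed by Kleiman's theorem, exactly in the style of the previous orbits. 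Throughout I use that every line in $\OO_8$ is a tangent line to $X$ at some $[N^2]$, hence lies in the embedded tangent plane $\TT_{[N^2]}X=\{[NR]:R\in\PP V^*\}\subset\tan(X)=S$; in particular every line of $\OO_8$ is contained in $S$.

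To compute $a$, I would count tangent lines to $X$ meeting a general line $M\subset\PP^5$. Because such a line lies in $S$, it can meet $M$ only at a point of $M\cap S$, and since $\deg S=3$ a general $M$ meets $S$ in three smooth (rank $2$) points $[LM']$. A rank $2$ conic $[LM']$ lies on exactly the two tangent planes $\TT_{[L^2]}X$ and $\TT_{[M'^2]}X$ (as $LM'=NR$ forces $N\in\{L,M'\}$), so the only tangent lines to $X$ through $[LM']$ are $\ol{[LM'],[L^2]}$ and $\ol{[LM'],[M'^2]}$. This yields $a=3\cdot 2=6$.

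To compute $b$, the crucial (and most error-prone) point is that $\Sigma_{2,1}$ is defined with respect to a \emph{flag}, so the fixed $2$-plane $A$ is contained in the fixed hyperplane $B$; forgetting this nesting would wrongly give $b=0$. I must therefore count tangent lines $\ell$ to $X$ with $\ell\subseteq B$ and $\ell\cap A\neq\emptyset$, where $A\subset B$. Now $\ell\subseteq B$ forces the point of tangency into $X\cap B=:\Gamma$, a rational normal quartic in $B\cong\PP^4$, and for general $B$ one has $\TT_pX\cap B=\TT_p\Gamma$ for every $p\in\Gamma$; hence the tangent lines to $X$ lying in $B$ are exactly the tangent lines to $\Gamma$. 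These sweep out the tangent developable $\tan(\Gamma)\subset\PP^4$, a surface of degree $2\cdot 4-2=6$, so a general $2$-plane $A\subset B$ meets it in $\deg\tan(\Gamma)=6$ points, each on a unique tangent line to $\Gamma$; thus $b=6$. (If one prefers a self-contained count, one can parametrize $\Gamma$ by $t\mapsto[1,t,t^2,t^3,t^4]$ and check that the condition for the tangent line at $t$ to meet the $2$-plane $\{H_1=H_2=0\}$ is the vanishing of the Wronskian $f_1f_2'-f_2f_1'$ of $f_i=H_i\circ\gamma$, a polynomial of degree $6$ once the $t^7$ terms cancel.)

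Combining the two counts gives $[\OO_8]=6\sigma_{4,1}+6\sigma_{3,2}$, whence $\deg\ol\OO_8=6\deg\Sigma_{4,1}+6\deg\Sigma_{3,2}=6\cdot 1+6\cdot 2=18$ via Lemma \ref{lemma: pluecker_degree_lines}. The main obstacle is precisely the correct bookkeeping for the nested flag in $\sigma_{2,1}$ together with the identification of $\OO_8$-lines inside $B$ with tangents to $\Gamma$ and the classical degree $2d-2$ of the tangent developable of a rational normal curve of degree $d$; the transversality (generic reducedness) needed to read the intersection numbers off the point counts is supplied by Kleiman's theorem in characteristic $0$.
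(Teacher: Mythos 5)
Your proposal is correct and follows essentially the same route as the paper: the same duality pairing with $\sigma_3$ and $\sigma_{2,1}$, the count $3\cdot 2=6$ for tangents to $X$ meeting a general line, and the degree-$6$ tangent developable of the rational normal quartic $H\cap X$ for the flag condition. The only (harmless) variation is your justification that a smooth point $[LM']$ of $S$ lies on exactly two tangent lines to $X$ via the factorization $LM'=NR$ in the tangent planes $\TT_{[L^2]}X$, $\TT_{[M'^2]}X$, where the paper instead uses the unique $2$-plane in $S$ through that point meeting $X$ in a conic.
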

\begin{proof}
Write $[\OO_8]=a\sigma_{4,1}+b\sigma_{3,2}$. To figure $a$ out, we take the product with $\sigma_{3}$, which amounts to asking: given a general line $\ell\subset \PP^5$, how many lines intersecting $\ell$ are tangent to the Veronese surface? Well, a general $\ell$ intersects the tangent variety $\tan(X)=S(X)=S$ in $\deg S = 3$ smooth points, and each such point lies on exactly two tangents to the Veronese surface. Indeed, any point in $S\minus X$ lies on a unique 2-plane contained in $S$ which intersects $X$ in a plane conic. The existence of such a plane follows from the standard argument that proves the defectiveness of the Veronese surface, see \cite[Ch. 11]{Harris}. The uniqueness of such a plane follows from the fact that points (or more generally divisors) on rational normal curves are in linear general position, see Lemma 10.14 in \cite[\S 10.4]{EisenbudHarris}. Given this, the two tangents to $X$ through a point $p\in S\minus X$ are exactly the two tangents to this mentioned plane conic. It follows that there are $a=6$ such lines interesecting $\ell$ and tagent to $X$.

To figure $b$ out, we take the product with $\sigma_{2,1}$, which amounts to asking: given a general 2-plane $\Pi\subset \PP^5$ and a hyperplane $H\subset \PP^5$ containing $\Pi$, how many lines incident to $\Pi$ and contained $H$ are tangent to $X$? As above, the intersection $H\cap X$ is a rational normal curve of degree 4. The tangent variety $\tan(H\cap X)$ to such a curve is a surface of degree $2\cdot 4-2=6$, see \cite[Ex. 19.11]{Harris}. Therefore, a general 2-plane $\Pi\subset H$ intersects $\tan(H\cap X)$ in exactly 6 points, each of which lies on a unique tangent to the rational normal curve, again by the linear independence of divisors on rational normal curves. Therefore, there are exactly $b=6$ such tangent lines in general.
\end{proof}
\begin{corollary}
The Plücker degree $\deg \OO_8=18$.
\end{corollary}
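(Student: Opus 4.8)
The plan is to follow exactly the template of the preceding corollaries: the Plücker degree is a linear functional on the Chow ring that is already known on Schubert classes, so I will simply combine the class computation $[\OO_8] = 6\sigma_{4,1} + 6\sigma_{3,2}$ from the previous proposition with the Plücker degrees of the two Schubert cycles $\Sigma_{4,1}$ and $\Sigma_{3,2}$ supplied by Lemma \ref{lemma: pluecker_degree_lines}.

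First I would recall why this reduction is legitimate. The Plücker embedding $\GG(1,5)\injto \PP^{14}$ pulls back the hyperplane class to $\sigma_1$, so for a subvariety of dimension $d$ its Plücker degree is the intersection number $\int_{\GG(1,5)}[\OO_8]\cdot\sigma_1^{d}$. Since $\OO_8$ has codimension $5$ and $\GG(1,5)$ has dimension $8$, we have $d=3$ and
\[
\deg \OO_8 = \int_{\GG(1,5)} [\OO_8]\cdot \sigma_1^3.
\]
Because this integral is linear in $[\OO_8]$, and $\int_{\GG(1,5)}\sigma_{a,b}\cdot\sigma_1^{8-a-b}=\deg\Sigma_{a,b}$ by definition, it follows that $\deg\OO_8 = 6\deg\Sigma_{4,1}+6\deg\Sigma_{3,2}$.

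Next I would evaluate the two Schubert degrees. Applying Lemma \ref{lemma: pluecker_degree_lines} with $N=5$ and $(a,b)=(4,1)$ gives $\deg\Sigma_{4,1}=\binom{3}{3}\cdot\frac{4}{4}=1$, while $(a,b)=(3,2)$ gives $\deg\Sigma_{3,2}=\binom{3}{2}\cdot\frac{2}{3}=2$. Substituting, $\deg\OO_8 = 6\cdot 1 + 6\cdot 2 = 18$, as claimed.

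There is no genuine obstacle here; the only care needed is in reading off the correct hook-length inputs. As an independent consistency check I would recompute the two degrees directly by Pieri's rule: since $\sigma_1^3=\sigma_3+2\sigma_{2,1}$ in $A\GG(1,5)$, and Poincar\'e duality on the $2\times 4$ box gives $\sigma_{4,1}^\vee=\sigma_3$ and $\sigma_{3,2}^\vee=\sigma_{2,1}$, one reads off $\int\sigma_{4,1}\sigma_1^3=1$ and $\int\sigma_{3,2}\sigma_1^3=2$, confirming the values above and hence the total $18$.
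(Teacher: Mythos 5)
Your proof is correct and follows exactly the paper's approach: combining $[\OO_8]=6\sigma_{4,1}+6\sigma_{3,2}$ with $\deg\Sigma_{4,1}=1$ and $\deg\Sigma_{3,2}=2$ from Lemma \ref{lemma: pluecker_degree_lines}. The extra justification of linearity and the Pieri-rule cross-check are sound but not needed beyond what the paper's one-line proof already does.
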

\begin{proof}
Combine the previous result with $\deg \Sigma_{4,1}=1$ and $\deg \Sigma_{3,2}=2$.
\end{proof}
\newpage 
\section{Some Chern Class Computations}

\subsection{The Chow Ring Class of the Fano Scheme $F_1(S)$}

The first proof of Proposition \ref{proposition: class_of_O7} suggests that we should calculate the multiplicity of the irreducible components $\ol\OO_6$ and $\ol\OO_7$ of the Fano scheme $F_1(S)$ of lines on $S$. We claim:

\begin{proposition}
The component $\ol\OO_6$ of $F_1(S)$ has multiplicity 4 and the component $\ol\OO_7$ has multiplicity 1.
\end{proposition}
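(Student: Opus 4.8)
The plan is to realize the Fano scheme $F_1(S)$ as the zero scheme of a section of a rank-$4$ bundle on $\GG(1,5)$ and to read off the two multiplicities by comparing the top Chern class of that bundle against the cycle $m_6[\ol\OO_6]+m_7[\ol\OO_7]$.

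First I would set up the standard description of the Fano scheme. Writing $\mathcal{S}\subset \OO_{\GG(1,5)}\otimes \Sym^2 V^*$ for the tautological rank-$2$ subbundle, whose fiber over a line $\ell=\PP W$ is the plane $W\subset \Sym^2 V^*$, the cubic form $\Delta\in \Sym^3(\Sym^2 V)$ restricts on each $W$ to an element of $\Sym^3 W^*$. This assembles into a global section $\tau_\Delta$ of $\Sym^3\mathcal{S}^*$, and by construction $\tau_\Delta$ vanishes at $[W]$ precisely when $\ell=\PP W\subseteq S$. Thus $F_1(S)=\VV(\tau_\Delta)$ is the zero scheme of a section of the rank-$4$ bundle $\Sym^3\mathcal{S}^*$ on the $8$-dimensional Grassmannian $\GG(1,5)$.

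Next, since we have already shown that $\ol\OO_6$ and $\ol\OO_7$ are the only two irreducible components of $F_1(S)$ and that each is irreducible of dimension $4$, the zero scheme $\VV(\tau_\Delta)$ has pure codimension $4$, which is exactly the rank of $\Sym^3\mathcal{S}^*$, hence the expected codimension. By the standard principle that the fundamental cycle of the expected-dimension zero locus of a section equals the top Chern class of the bundle, we obtain
\[ m_6[\ol\OO_6]+m_7[\ol\OO_7]=[F_1(S)]=c_4(\Sym^3\mathcal{S}^*)\in A\GG(1,5), \]
where $m_6,m_7$ are precisely the scheme-theoretic multiplicities (the lengths of the local rings of $F_1(S)$ at the generic points of the two components) that the proposition asks us to determine. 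It then remains to compute $c_4(\Sym^3\mathcal{S}^*)$ and solve a linear system. By the splitting principle, if $\alpha,\beta$ are the Chern roots of $\mathcal{S}^*$, so that $\sigma_1=\alpha+\beta$ and $\sigma_{1,1}=\alpha\beta$, then $\Sym^3\mathcal{S}^*$ has Chern roots $3\alpha,\,2\alpha+\beta,\,\alpha+2\beta,\,3\beta$, whence $c_4=9\alpha\beta(2\alpha+\beta)(\alpha+2\beta)$. Expanding and rewriting in the Schubert basis via the identities $\sigma_{2,2}=(\alpha\beta)^2$ and $\sigma_{3,1}=(\alpha+\beta)^2\alpha\beta-(\alpha\beta)^2$ yields $c_4(\Sym^3\mathcal{S}^*)=18\sigma_{3,1}+27\sigma_{2,2}$ (and coefficient $0$ on $\sigma_4$, consistently with the known classes). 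Substituting $[\ol\OO_6]=3\sigma_{3,1}+6\sigma_{2,2}$ and $[\ol\OO_7]=6\sigma_{3,1}+3\sigma_{2,2}$ gives the system $3m_6+6m_7=18$ and $6m_6+3m_7=27$, whose unique solution is $m_6=4,\ m_7=1$.

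The main obstacle is the justification that $[F_1(S)]=c_4(\Sym^3\mathcal{S}^*)$ holds as an equality of \emph{cycles} with the correct multiplicities, rather than merely after passing to some coarser equivalence. This requires confirming that $\VV(\tau_\Delta)$ carries no excess or embedded components of dimension exceeding $4$, so that the localized top-Chern-class formula applies cleanly; the purity follows from the dimension count already established for $\ol\OO_6$ and $\ol\OO_7$, but one must check there are no higher-dimensional degeneracy loci of $\tau_\Delta$. Granting this, the linear independence of $\sigma_{3,1}$ and $\sigma_{2,2}$ in $A\GG(1,5)$ forces the solution to be unique, and the Chern-class computation delivers exactly the asserted lengths, namely multiplicity $4$ along $\ol\OO_6$ and multiplicity $1$ along $\ol\OO_7$.
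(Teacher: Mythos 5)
Your proposal is correct, but it takes a genuinely different route from the paper's. The paper obtains the multiplicities geometrically: it degenerates the classical configuration of $27$ lines on a smooth cubic surface to the Cayley cubic $S\cap \Pi$ for a general $3$-plane $\Pi$, observes that the six exceptional curves each absorb four of the $27$ lines while the three diagonals of the complete quadrilateral absorb one each, and transfers these multiplicities to $F_1(S)$. You instead treat the identity $[F_1(S)]=c_4(\Sym^3\CS^*)=18\sigma_{3,1}+27\sigma_{2,2}$ (which the paper proves separately as Proposition \ref{proposition: fano_scheme}) as a linear equation in the unknown multiplicities and invert it against the independently computed classes $[\ol\OO_6]=3\sigma_{3,1}+6\sigma_{2,2}$ and $[\ol\OO_7]=6\sigma_{3,1}+3\sigma_{2,2}$; since $F_1(S)=\VV(\tau_\Delta)$ is pure of codimension $4$, equal to the rank of $\Sym^3\CS^*$, the localized top Chern class formula does give the fundamental cycle with its scheme-theoretic multiplicities, your Chern root computation is right, and the $2\times 2$ system has the unique solution $m_6=4$, $m_7=1$. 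The trade-off is worth flagging: your argument consumes $[\ol\OO_7]$ as an input, which in this paper is supplied by the first, enumerative proof of Proposition \ref{proposition: class_of_O7} (resting on Lemma \ref{lemma: secant_threefold} and the line count on the Cayley cubic); consequently your derivation of the multiplicities cannot then be fed back into the paper's second proof of Proposition \ref{proposition: class_of_O7} without circularity. The paper's degeneration argument is independent of $[\ol\OO_7]$, which is precisely what licenses that second proof; your version is cleaner and more self-contained if one only wants the multiplicities themselves.
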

\begin{proof}
We know that every smooth cubic surface in $\PP^3$ contains exactly 27 lines, and these can be seen explicitly by considering $S$ as the blowup of the of the plane at a general configuration of 6 points\footnote{The precise condition is that the 6 points be in linear general position and that there is no conic passing through them. See \cite[\S 4.1]{GH} for more on this bit of classical theory.}; namely, these are the 6 exceptional divisors, the proper transforms of the $\binom{6}{2}=15$ lines joining these points, and the proper transforms of the six conics through all but one of the points. As this general configuration of 6 points specializes to the 6 vertices of a complete quadrilateral, it is easy to see how these lines specialize to the lines on the Cayley cubic and with what multiplicity; namely, we get the 6 exceptional divisors, each with multiplicity 4, and the three proper transforms of the diagonals of the complete quadrilateral.\footnote{See \cite[\S 4.6]{GH} for more details.} These same multiplicities must therefore also hold for the Fano scheme $F_1(S)$ of lines in $S$.
\end{proof}

Given this, we can write the class $[F_1(S)]=4[\ol\OO_6]+[\ol\OO_7]\in A\GG(1,5)$. Therefore, if we can figure our $[F_1(S)]$, then  we can use this to give another calculation of $[\ol\OO_7]$, since we gave a self-contained calculation of $[\ol\OO_6]$ in Proposition \ref{proposition: class_of_O6} above.

\begin{proposition}
\label{proposition: fano_scheme}
The class of the Fano scheme $[F_1(S)]=18\sigma_{3,1}+27\sigma_{2,2}$.
\end{proposition}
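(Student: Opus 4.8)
The plan is to realize the Fano scheme $F_1(S)$ as the zero scheme of a section of a rank-$4$ vector bundle on $\GG(1,5)$ and then read off its class as the top Chern class of that bundle. Recall that $S=\VV(\Delta)$ for a cubic form $\Delta$ on $\PP^5=\PP\Sym^2 V^*$. Let $\mathcal{S}$ denote the rank-$2$ tautological subbundle on $\GG(1,5)$, whose fiber over a line $\ell$ is the corresponding $2$-dimensional subspace of $\Sym^2 V^*$. Restricting $\Delta$ to each line defines a global section $\tau_\Delta$ of $\mathcal{E}:=\Sym^3\mathcal{S}^*$, a bundle of rank $\binom{3+1}{1}=4$ (this is the $r=4$ incarnation of the bundle $\SE^r$ from \S 2.1), whose zero locus is exactly $F_1(S)=\{\ell:\ell\subset S\}$. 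Since $\GG(1,5)$ has dimension $8$ and $\mathcal{E}$ has rank $4$, the expected codimension of $F_1(S)$ is $4$; and we have already seen that $F_1(S)=\ol\OO_6\cup\ol\OO_7$ has \emph{pure} dimension $4$. Hence the zero scheme has the expected codimension, and the general theory of the top Chern class as the class of the zero scheme of a section gives $[F_1(S)]=c_4(\mathcal{E})=c_4(\Sym^3\mathcal{S}^*)$ in $A^4\GG(1,5)$, with the correct scheme-theoretic multiplicities.

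Next I would compute this Chern class by Chern roots. Writing $\alpha,\beta$ for the Chern roots of $\mathcal{S}^*$, so that $c_1(\mathcal{S}^*)=\alpha+\beta$ and $c_2(\mathcal{S}^*)=\alpha\beta$, the Chern roots of $\Sym^3\mathcal{S}^*$ are $3\alpha,\ 2\alpha+\beta,\ \alpha+2\beta,\ 3\beta$. Therefore
\[c_4(\Sym^3\mathcal{S}^*)=(3\alpha)(3\beta)(2\alpha+\beta)(\alpha+2\beta)=9\alpha\beta\,(2\alpha^2+5\alpha\beta+2\beta^2).\]
Rewriting the symmetric factor via $c_1:=\alpha+\beta$ and $c_2:=\alpha\beta$ as $2\alpha^2+5\alpha\beta+2\beta^2=2c_1^2+c_2$, this becomes
\[c_4(\Sym^3\mathcal{S}^*)=9c_2\,(2c_1^2+c_2)=18\,c_1^2c_2+9\,c_2^2.\]
Under the standard identification $c_1(\mathcal{S}^*)=\sigma_1$ and $c_2(\mathcal{S}^*)=\sigma_{1,1}$ on $\GG(1,5)$, this reads $18\,\sigma_1^2\sigma_{1,1}+9\,\sigma_{1,1}^2$.

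It then remains to expand these products in the Schubert basis. By Pieri, $\sigma_1\sigma_{1,1}=\sigma_{2,1}$ and $\sigma_1\sigma_{2,1}=\sigma_{3,1}+\sigma_{2,2}$, so $\sigma_1^2\sigma_{1,1}=\sigma_{3,1}+\sigma_{2,2}$; and a short computation (e.g. from $\sigma_{1,1}=\sigma_1^2-\sigma_2$ together with $\sigma_2^2=\sigma_4+\sigma_{3,1}+\sigma_{2,2}$ and $\sigma_2\sigma_{1,1}=\sigma_{3,1}$) gives $\sigma_{1,1}^2=\sigma_{2,2}$. Substituting,
\[[F_1(S)]=18(\sigma_{3,1}+\sigma_{2,2})+9\,\sigma_{2,2}=18\,\sigma_{3,1}+27\,\sigma_{2,2},\]
as claimed. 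As a consistency check, this matches $4[\ol\OO_6]+[\ol\OO_7]=4(3\sigma_{3,1}+6\sigma_{2,2})+(6\sigma_{3,1}+3\sigma_{2,2})$, coming from the multiplicity computation above and Proposition \ref{proposition: class_of_O6}; combining the two therefore simultaneously recovers $[\ol\OO_7]$.

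The computation is essentially routine, so the only genuinely load-bearing points are conceptual rather than calculational: the identification of $F_1(S)$ with the zero scheme of $\tau_\Delta$ (standard for Fano schemes of hypersurfaces), and the verification that this zero scheme has pure codimension $4$, which is exactly what licenses the equality $[F_1(S)]=c_4(\mathcal{E})$ with correct multiplicities. The latter is already guaranteed by the pure $4$-dimensionality of $\ol\OO_6\cup\ol\OO_7$ established in \S 2--3, so I expect no real obstacle beyond keeping the Chern-root bookkeeping and the Pieri products straight.
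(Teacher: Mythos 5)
Your proposal is correct and follows essentially the same route as the paper's first proof: both identify $F_1(S)$ as the zero scheme of the section $\tau_\Delta$ of $\Sym^3\CS^*$, justify $[F_1(S)]=c_4(\Sym^3\CS^*)$ via the pure expected codimension, and evaluate the Chern class by the splitting principle to get $9\sigma_{1,1}(2\sigma_1^2+\sigma_{1,1})=18\sigma_{3,1}+27\sigma_{2,2}$. Your Pieri bookkeeping and the consistency check against $4[\ol\OO_6]+[\ol\OO_7]$ are both sound.
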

\begin{proof}[Proof 1 of Proposition \ref{proposition: fano_scheme}.]
We use \cite[Prop. 6.4]{EisenbudHarris}. Since $\dim F_1(S)=4$, its class in $A\GG(1,5)$ can be computed as the Chern class $c_4(\Sym^3 \CS^*)$, where $\CS\to \GG(1,5)$ is the tautological bundle. We know that $c(\CS^*)=1+\sigma_{1}+\sigma_{1,1}$. It follows exactly as in \cite[\S 6.2.1]{EisenbudHarris} from the splitting principle that \[c_4(\Sym^3\CS^*)=9\sigma_{1,1}(2\sigma_1^2+\sigma_{1,1})=18\sigma_{3,1}+27\sigma_{2,2}.\]
\end{proof}

Now we have:

\begin{proof}[Proof 2 of Proposition \ref{proposition: class_of_O7}]
We have \[[\ol\OO_7]=[F_1(S)]-4[\ol\OO_6]=18\sigma_{3,1}+27\sigma_{2,2}-4(3\sigma_{3,1}+6\sigma_{2,2})=6\sigma_{3,1}+3\sigma_{2,2}.\]
\end{proof}

\subsection{Chern Classes of Bundles of Relative Principal Parts}
\label{section: another_computation}

There is another construction that allows us to study the orbit closures mentioned above and their classes, namely the bundle of relative principal parts. This section will follow and use notation and terminology from \cite[\S 11.1]{EisenbudHarris}. Denote by $\Phi=\PP \CS$ the flag bundle that is the projectivized universal subbundle over $\GG(1,5)$, i.e. let \[\Phi:=\{(\ell,p):p\in\ell\}\subset \GG(1,5)\times\PP^5 ,\]

and let $\pi_1$ and $\pi_2$ denote its projections to $\GG(1,5)$ and $\PP^5$ respectively. Recall that the Chow ring $A(\Phi)$ of $\Phi$ is given by \[A(\Phi)= A\GG(1,5)[\zeta]/(\zeta^2-\sigma_{1}\zeta+\sigma_{1,1}),\]
where $\zeta=c_1(\OO_{\Phi}(1))=\pi_2^*\zeta_{\PP^5}$ is the pullback of a hyperplane class (i.e. a generator) in $A(\PP^5)$. Further, the pushforward map $\pi_{1,*}:A(\Phi)\to A\GG(1,5)$ is given by taking the coefficient of $\zeta$, i.e. \[\pi_{1,*}:A(\Phi)\to A\GG(1,5), \quad \alpha\zeta+\beta\mapsto \alpha.\]
For each $1\leq r\leq 4$, consider the bundle of relative principal parts  \[\SE^r:=\SP^{r-1}_{\Phi/\GG(1,5)}(\pi_2^*\OO_{\PP^5}(3))\]
over $\Phi$ of rank $r$ whose fiber over a point $(\ell,p)$ is \[\SE^r_{(\ell,p)}=H^0(\OO_\ell(3)/\fm_p^r(3)).\]

This admits a global section $\tau_{\Delta}\in H^0(\Phi, \SE^r)$ corresponding to the polynomial $\Delta$. The vanishing locus $\VV(\tau_\Delta)\subset \Phi$ is then exactly the locus $\Phi_r$ defined in \S 2. Therefore, it follows that if $\codim_\Phi \Phi_r=r$, then \[[\Phi_r]=c_r(\SE^r)\in A(\Phi)\]
is the $r^\text{th}$ Chern class of $\SE^r$. Finally, in the cases where this holds and $\pi_1|_{\Phi_r}:\Phi_r\to \GG(1,5)$ is birational onto its image, then $\pi_{1,*}[\Phi_r]=[\pi_1(\Phi_r)]$. Therefore, computing the Chern class of $\SE^r$ and taking its pushforward should tell us something about the class of the orbit closures. To compute the Chern class of $\SE^r$, we use the filtration of $\SE^r$ with successive quotients \[\pi_2^*\OO_{\PP^5}(3), \pi_2^*\OO_{\PP^5}(3)\otimes \Omega_{\Phi/\GG(1,5)},\cdots,\pi_2^*\OO_{\PP^5}(3)\otimes \Sym^{r-1}\Omega_{\Phi/\GG(1,5)},\] 
where $\Omega_{\Phi/\GG(1,5)}$ is the relative cotangent bundle of the map $\Phi\to \GG(1,5)$. Since \[c(\pi_2^*\OO_{\PP^5}(3))=1+3\zeta\text{ and }c(\Sym^{m}\Omega_{\Phi/\GG(1,5)})=1+m(\sigma_1-2\zeta),\]
it follows that \[c(\SE^r)=\prod_{m=0}^{r-1}(1+(3-2m)\zeta+m\sigma_1)\]
so that \[c_{r}(\SE^r)=\prod_{m=0}^{r-1}((3-2m)\zeta+m\sigma_1).\]

For $r=2,3,4$, these are easily seen to be \begin{align*}
    c_2(\SE^2) &= 6\sigma_1\zeta +3\sigma_{1,1},\\
    c_3(\SE^3) &= (6\sigma_2+9\sigma_{1,1})\zeta,\text{ and}\\
    c_4(\SE^4) &= 18\sigma_{3,1}+27\sigma_{2,2}
\end{align*}

From this we have:

\begin{proof}[Proof 3 of Proposition \ref{proposition: class_of_O2}]
We showed in Proposition \ref{proposition: orbit_O2} that $\dim \Phi_2 = 7$, so that $\codim_\Phi \Phi_2=\dim\Phi-\dim\Phi_2=9-7-2$. Further, the projection $\pi_1:\Phi_2\to \ol\OO_2$ is regular and birational, and so it follows that \[[\ol\OO_2]=[\pi_1(\Phi_2)]=\pi_{1,*}[\Phi_2]=\pi_{1,*}(6\sigma_1\zeta+3\sigma_{1,1})=6\sigma_1.\]
\end{proof}

\begin{proof}[Proof 2 of Proposition \ref{proposition: class_of_O4}]
We showed in Proposition \ref{proposition: class_of_O4} that $\dim \Phi_3=6$, so that $\codim_\Phi \Phi_3=3$. Further, the projection $\pi_1:\Phi_3\to \ol\OO_4$ is regular and birational, and so it follows that \[[\ol\OO_4]=[\pi_1(\Phi_3)]=\pi_{1,*}[\Phi_3]=\pi_{1,*}\left((6\sigma_2+9\sigma_{1,1})\zeta\right)=6\sigma_2+9\sigma_{1,1}.\]
\end{proof}

\begin{proof}[Proof 2 of Proposition \ref{proposition: fano_scheme}]
    The locus $\Phi_4 = \pi_2\inv F(S)$ (as a scheme!), so that it follows that $c_4(\SE^4) = [\Phi_4]=\pi_2^* [F(S)]$. Since $\pi_2^*$ is injective, the result follows.
\end{proof}
\newpage
\section{Dealing with Characterstic $p>2$}

In positive characteristic $p>2$, our results are still correct. Most of the above arguments remain valid; the only thing that changes is that we have to replace the use of Kleiman's transversality theorem by an explicit calculation of the generic transversality of the corresponding cycles. Since most of the cycles we are interested in incidence varieties, varieties of secant lines, varieties of tangent lines, or Fano varieties in the Grassmannian, we can use the explicit description of the tangent spaces to such cycles (as found in say \cite[Lect. 16]{Harris}) to check generic transversality. In this section, we check the result for one orbit closure for which such description is not as easy to write down (or locate a reference for), namely $\ol\OO_5$.

Recall that $\OO_5$ is the orbit of lines intersecting $S$ in a unique point of $X$. In an affine neighborhood of a line $\ell\in \OO_5$, it is straightforward to write down defining equations of the variety $\ol\OO_5$; to help in this process, you can take one point of such a line varying on $X$. Given this, it is very easy to figure out the tangent space $T_\ell \OO_5$ as a subspace of $T_\ell \GG(1,5)=\Hom(\ell, K^6/\ell)$. We see that if $p\in \ell\cap X$ is the unique point of intersection of $\ell$ with $X$, then if $H_\ell\subset K^6$ is the hyperplane of conics passing through the (underlying reduced subscheme, i.e. the point) of the base locus $B_\ell$, then \[T_\ell \OO_5 = \{\vphi\in \Hom(\ell, K^6/\ell): \vphi(p)\subseteq \TT_p X+\ell\text{ and } \vphi(\ell)\subseteq H_\ell\}.\]
This amounts to saying that as we vary the line $\ell$ along the tangent vector $\vphi$, to keep it incident to $X$ amounts to the condition $\vphi(p)\subseteq \TT_pX +\ell$, and to keep $\ell$ in the tangent cone to $S$ at its point of intersection with $X$ amounts to the additional condition $\vphi(\ell)\subseteq H_\ell$. From this, it follows that a general complementary Schubert cycle is generically transverse to $\OO_5$, which is the needed generic transversality result.
\newpage
\section{Suggestions for Further Work}

There are many natural ways to extend our results; we mention a few here.

\begin{enumerate}
    \item 
While $n=2, d=1$ is the last nontrivial case in which there are only finitely many orbits of the $\PGL_{n+1}$ action on $\GG(d, n(n+3)/2)$, for higher $n$ and $d$ there are still certain exceptional orbits, the classes in $A\GG(d, n(n+3)/2)$ of the closures of which can be calculated by methods similar to ours. For instance, one could look at the Chow ring classes of the 14 exceptional orbits of nets of plane conics ($n=d=2$) mentioned in \cite{AEI}.

\item The same also applies to linear systems of cubics or higher degree hypersurfaces. Even the case of plane cubics is already very interesting and nontrivial, c.f. \cite[\S 2.2]{EisenbudHarris} and the references there. In particular, see the book \cite{Greuel} by Greuel, Lossen and Shustin for more on what is known in the general case.

\item Finally, one can try to find the analogous results to ours in characteristic $p=2$, as Vainsencher does in \cite{Vainsencher} for the classical problem of the number of smooth conics tangent to 5 general conics (the answer being $51$ as opposed to $3264$ in characteristic $p\neq 2$).

\end{enumerate}

\newpage
\bibliographystyle{utphys}
\bibliography{main.bib}
\end{document}